\documentclass[11pt]{amsart}
\usepackage{amsmath,amssymb,amsthm}
\usepackage{enumitem}
\usepackage{xcolor}
\usepackage[colorlinks=true,linkcolor=blue,citecolor=blue,urlcolor=blue]{hyperref}
\usepackage{microtype}

\newtheorem{theorem}{Theorem}[section]
\newtheorem{lemma}[theorem]{Lemma}
\newtheorem{proposition}[theorem]{Proposition}
\newtheorem{corollary}[theorem]{Corollary}
\newtheorem{conjecture}[theorem]{Conjecture}

\newtheorem{sublemma}{}[theorem]
\theoremstyle{definition}

\DeclareMathOperator{\cl}{cl}
\DeclareMathOperator{\si}{si}
\DeclareMathOperator{\GF}{GF}
\DeclareMathOperator{\PG}{PG}
\DeclareMathOperator{\AG}{AG}

\newcommand{\cB}{\mathcal B}
\newcommand{\cF}{\mathcal F}

\setlist[enumerate,1]{label=\textup{(\roman*)},leftmargin=2.2em,itemsep=2pt,topsep=4pt}
\title[Unavoidable flats and uniform minors]{Unavoidable flats in matroids excluding a uniform minor}
\author{Wayne Ge}
\address{Auburn University, Auburn, AL}
\email{yig0001@auburn.edu}
\date{October 27, 2026}
\subjclass[2020]{Primary 05B35; Secondary 05D10, 51E15}
\keywords{Unavoidable flats, independent flats, excluded uniform minors, Ramsey theory}

\begin{document}
\begin{abstract}
We extend a theorem of Geelen, Kroeker, and Nelson on unavoidable flats in $\GF(q)$-representable matroids. For all integers $1\le a\le b$ and $n\ge1$, we prove that every simple matroid of sufficiently large rank with no $U_{a,b}$-minor has a rank-$n$ flat whose restriction is free or isomorphic to a projective or affine geometry over a finite field.
\end{abstract}
\maketitle

\section{Introduction}\label{sec:introduction}

Geelen, Kroeker, and Nelson~\cite[Main Theorem]{GKN26} proved the following theorem on unavoidable flats in $\GF(q)$-representable matroids.

\begin{theorem}\label{thm:finite-field}
For each prime power $q$ and positive integer $n$, there is an integer $N_{\ref{thm:finite-field}}(q,n)$ such that every simple $\GF(q)$-representable matroid of rank at least $N_{\ref{thm:finite-field}}(q,n)$ has a rank-$n$ flat whose restriction is free or isomorphic to $\PG(n-1,q_0)$ or $\AG(n-1,q_0)$ for a subfield $\GF(q_0)$ of $\GF(q)$.
\end{theorem}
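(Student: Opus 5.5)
This theorem is quoted from \cite[Main Theorem]{GKN26}, so the paper will simply cite it. If I had to prove it, I would use a density-increment argument combined with Ramsey-type reductions inside $\PG(N-1,q)$.

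\textbf{Setup.} View $M$ as a set $E\subseteq \PG(r-1,q)$ of points. The aim is a rank-$n$ flat $F$ such that $E\cap F$ is one of three things: a basis of $F$ (free); all of $F$; or $F$ minus a hyperplane of $F$ over some subfield $\GF(q_0)$.

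\textbf{Step 1: sparse versus dense.} I would first separate the case where $E$ contains no full $\PG(m-1,q_0)$ over any subfield, for a suitable $m$. In that case a standard Ramsey argument for sets of points (a Hales--Jewett / Graham--Rothschild type statement for projective spaces) should produce a large independent set whose closure meets $E$ only in that set. This is a free flat. The mechanism is to pick points greedily, at each step avoiding the span-closures, and to use a bounded-colouring pigeonhole on the subsets of chosen points. The hypothesis that rank is large, rather than size, is exactly what makes this greedy step work.

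\textbf{Step 2: the dense case.} Otherwise I would use a density Hales--Jewett theorem over $\GF(q)$ (Furstenberg--Katznelson) iterated by a density-increment argument. The goal is to pass to a large-rank flat $F$ on which $E\cap F$ is highly structured. Once $E$ has positive density in some large flat, the density theorem gives a subflat fully contained in $E$, and hence a $\PG$. The delicate situation is density bounded away from $1$. There I would show, via a "parity" or hyperplane-complement dichotomy, that one finds a flat $F$ and a hyperplane $H$ of $F$ with $E\cap F=F\setminus H$, giving an affine geometry. Subfields enter because configurations like $\PG(n-1,q_0)$ embedded in $\PG(n-1,q)$ are themselves unavoidable. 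I would handle them by colouring points according to which subfield-subgeometry they generate, and then applying Ramsey for subspaces (Graham--Leeb--Rothschild) to make this colouring constant.

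\textbf{Step 3: main obstacle.} The hardest part will be the intermediate-density regime. Here $E$ neither avoids all large flats nor fills them. One must show that the only stable obstruction is the affine pattern, possibly over a subfield. I would attempt this by induction on $n$. Project from a point $e\in E$ and classify the lines through $e$ by their intersection pattern with $E$. Apply Graham--Leeb--Rothschild to make this classification uniform. The uniform pattern should then force $E\cap F$ to be a union of cosets of a subfield subgeometry. A final counting step would rule out every such union except full $\PG$ and $\AG$ restrictions.
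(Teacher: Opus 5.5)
Citing \cite[Main Theorem]{GKN26} is exactly what the paper does; it does not reprove this statement, so in context nothing more is needed. The sketch you offer as a would-be proof, however, has genuine gaps, starting with Step~1, whose sparse/dense dichotomy is false. Take $q=4$ and let $E$ be $\AG(r-1,2)$ embedded in $\PG(r-1,4)$ via $\GF(2)\subseteq\GF(4)$. It is triangle-free, so it contains no $\PG(m-1,q')$-restriction for any $m\ge2$ and any $q'$. Yet any three of its points span a four-point plane, so it has no rank-$3$ independent flat, and your greedy choice gets stuck at the third point. (For prime $q$, $E=\AG(r-1,q)$ behaves the same way.) So the affine outcome must arise inside your ``sparse'' case, and nothing in Step~1 produces it. The Ramsey ingredients you name do not repair this by themselves. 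Colouring ambient points by membership in $E$ may return a large subspace disjoint from $E$, which is not a flat of $M$. A colouring of subsets of chosen points may be homogeneous in the bad colour, and analysing that bad homogeneous case is exactly where the non-free outcomes have to come from.

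Step~2 also contains a false claim: that positive density on a large flat yields a subflat contained in $E$. $\AG(r-1,q)$ has density about $1-1/q$ in $\PG(r-1,q)$ but contains no line of it. Applied to the cone over $E$ in $\GF(q)^r$, Furstenberg--Katznelson actually gives a coset $v+W$, i.e.\ a flat $F$ with a hyperplane $H$ such that $F\setminus H\subseteq E$. Now colour the points of $H$ by membership in $E$ and apply the vector-space Ramsey theorem. This gives a rank-$(n-1)$ subspace $W'\subseteq H$ contained in or disjoint from $E$. The projective span of $v$ and $W'$ then meets $E$ in a rank-$n$ flat of $M$ isomorphic to $\PG(n-1,q)$ or $\AG(n-1,q)$. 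So the positive-density regime closes quickly, and your ``main obstacle'' is misidentified.

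The hard regime is density tending to zero on every large flat. It contains the subfield geometries $\PG(r-1,q_0)$ and $\AG(r-1,q_0)$ with $q_0<q$, which have density roughly $(q_0/q)^{k-1}$ on each rank-$k$ flat they span. It also contains free matroids and mixtures of these. Density Hales--Jewett is blind there, and Step~3 supplies no argument. Making the types of lines through a single point uniform controls only rank-one data in the contraction at that point, not $E\cap F$. The ``union of cosets of a subfield subgeometry'' structure and the ``final counting step'' are asserted rather than proved.
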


Theorem~\ref{thm:finite-field} first appeared in Kroeker's dissertation~\cite[pp.~32--33]{Kroeker25} and extends an earlier theorem of Geelen and Kroeker~\cite[Theorem~1.2]{GK24} for matroids representable over a fixed prime field. These results advance some Ramsey-type conjectures discussed by Jim Geelen in an online talk~\cite{GeelenRamsey21} and an earlier post~\cite{Geelen21}. We find the following ``wild version'' particularly appealing.

\begin{conjecture}\label{conj:wild-version}
For all integers $\ell,n\ge2$, there is an integer $N_{\ref{conj:wild-version}}(\ell,n)$ such that every simple matroid of rank at least $N_{\ref{conj:wild-version}}(\ell,n)$ with no line of length at least $\ell+1$ has a rank-$n$ flat whose restriction is free or isomorphic to a projective or affine geometry over a finite field.
\end{conjecture}

The line-length hypothesis in Conjecture~\ref{conj:wild-version} bounds the field orders of the projective and affine geometries. Lines in $\PG(n-1,q)$ and $\AG(n-1,q)$ have $q+1$ and $q$ elements, respectively, so the projective and affine geometries have field orders at most $\ell-1$ and $\ell$, respectively.

Representability plays an essential role in the proofs of~\cite{GK24} and~\cite{GKN26}. We replace this hypothesis by the exclusion of a fixed uniform minor. For each prime power $q$, every $\GF(q)$-representable matroid has neither a $U_{2,q+2}$-minor nor a $U_{q,q+2}$-minor. On the other hand, matroids excluding a fixed uniform minor, in general, need not be representable. We prove the following theorem.

\begin{theorem}\label{thm:unavoidable}\label{thm:main}
For all integers $1\le a\le b$ and $n\ge1$, there is an integer $R_{\ref{thm:unavoidable}}(a,b,n)$ such that every simple matroid $M$ with no $U_{a,b}$-minor and $r(M)\ge R_{\ref{thm:unavoidable}}(a,b,n)$ has a rank-$n$ flat $F$ such that $M|F$ is either free or isomorphic to a projective or affine geometry over a finite field.
\end{theorem}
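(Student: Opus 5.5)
We argue by cases, using the growth-rate theorem of Geelen, Kabell, Kung and Whittle for minor-closed classes excluding a uniform minor, together with Theorem~\ref{thm:finite-field}. The first reduction disposes of the trivial cases. If $a=1$, or if $b\le a+1$, then excluding $U_{a,b}$ forces the matroid to be close to free or to have bounded rank in a controlled way. For instance, excluding $U_{a,a+1}$ means every set of size $a+1$ is dependent, so $r(M)<a+1$ unless $M$ is free on small sets; hence rank at least $a$ is impossible for a simple matroid with $a\ge 2$. We may therefore assume $2\le a\le b-2$. Since $U_{a,b}$ is a minor of $U_{a',b'}$ whenever $a\le a'$ and $b-a\le b'-a'$, excluding $U_{a,b}$ implies excluding $U_{2,\ell+2}$ for $\ell=b-2$, by contracting $a-2$ elements. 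It also implies excluding $U_{b-2,b}$, by deleting. Thus lines have length at most $b-1$, and the class $\mathcal M$ of matroids with no $U_{a,b}$-minor is a minor-closed class omitting some line. By the Growth Rate Theorem, $\mathcal M$ either has linearly bounded size, or contains all $\GF(q)$-representable matroids for some $q$ and has size $O(q^{r})$.

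The main strategy is a density-increment argument modelled on~\cite{GKN26}. We take $M$ with huge rank and aim for a free flat first. If $M$ has a rank-$n$ flat that is independent, we are done. Otherwise every rank-$n$ flat contains a circuit of size at most $n$. By a Ramsey or Hales--Jewett style argument (the matroid Ramsey theorem of Nelson and coauthors on independent flats), we can then find a large-rank minor, indeed a restriction to a flat, that is \emph{round} or dense. Concretely, we seek a flat $F$ of large rank $m$ such that $|F|\ge c\, q^{m}$ for the largest $q$ with $\PG(m-1,q)$ in the class. Then the theorem of Geelen and Nelson on dense matroids in minor-closed classes implies that some further restriction, after contracting a bounded set, is $\GF(q)$-representable. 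This applies the structure theorem ``dense members of a quadratically dense class are, up to bounded perturbation, representable over $\GF(q)$''.

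Once we obtain a large-rank flat $F$ with $M|F$ representable over a finite field, Theorem~\ref{thm:finite-field} applied to $M|F$ yields a rank-$n$ flat of $M|F$ that is free, projective, or affine. It is also a flat of $M$, since flats of a restriction to a flat are flats of $M$. The key issue is therefore to move from ``perturbation of representable'' to an honest \emph{restriction to a flat} that is representable. Contractions do not preserve the flat structure we need. I would attempt to handle the bounded perturbation by a second Ramsey step: the perturbing set has bounded rank $t$, so we can pass to a flat of rank much larger than $n$ that avoids interacting with the perturbation. This uses a pigeonhole argument on cosets of the perturbing flat.

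The main obstacle is the linear-density case, where $\mathcal M$ has size at most $c\,r$, so no projective geometry appears. Here I would show that a sparse simple matroid of huge rank has a free rank-$n$ flat. Bounded density means average line and plane counts are bounded. An iterative choice, picking elements outside the closure-spans of many short circuits through a Ramsey-type hypergraph argument on the bounded-size circuits, should yield an independent flat of rank $n$. Proving that such a flat is genuinely closed, with no extra element lying in its span, is the delicate step, and I expect it to require the independent-flat Ramsey theorem as a black box rather than a direct argument.
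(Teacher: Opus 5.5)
There is a genuine gap, and it sits at your first structural step. The minor relation you use runs the wrong way. Contracting $a-2$ elements of $U_{a,b}$ shows that $U_{2,b-a+2}$ is a minor of $U_{a,b}$, so excluding a line implies excluding $U_{a,b}$, not conversely. For $3\le a<b$, every line $U_{2,m}$, and every direct sum of such lines, has no $U_{a,b}$-minor, because a connected minor of a direct sum is a minor of one summand. So the class omits no line, and $|M|$ is not bounded by any function of $r(M)$. The Growth Rate Theorem therefore does not apply, and a size-based Nelson--Norin bound is useless, since large size no longer forces a projective geometry minor.

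Your disposal of the cases $a=1$ and $b=a+1$ is also wrong. A matroid has a $U_{a,a+1}$-minor exactly when it has a circuit with at least $a+1$ elements. Free matroids and direct sums of triangles therefore have unbounded rank and no $U_{3,4}$-minor, so these cases are not trivial.

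The paper gets around this by replacing size with the covering number. After reducing to $U_{s,2s}$ with $s=\max\{2,a,b-a\}$, Theorem~\ref{thm:covering-growth} gives $\tau_{s-1}(N)\le r(N)^c$ for matroids with no $U_{s,2s}$-minor and no rank-$n$ projective geometry minor. Lemma~\ref{lem:exponential-covering}, a covering-number version of the Nelson--Norin contraction argument, gives $\tau_{s-1}(M)\ge 2^{\lceil r/(k-1)\rceil-(s-1)}$ for simple $M$ with no rank-$k$ independent flat. So a high-rank simple member of $\cB(s)$ has either a rank-$t$ independent flat or a large projective geometry minor. This dichotomy also disposes of the ``linear-density case'' you leave open; no Ramsey or hypergraph argument is needed.

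The passage from a projective geometry minor to a representable flat is also not carried out. You give no way to find a flat $F$ with $|F|\ge c\,q^m$, and the ``second Ramsey step'' on cosets is unspecified. The paper needs neither. It argues by contradiction on the class $\mathcal C$ of counterexamples. It picks $q$ maximal among the finitely many orders below $2s$ (Proposition~\ref{prop:field-bound}) for which $\mathcal C$ has $\PG(r-1,q)$-minors for every $r$. This choice supplies the hypothesis of Theorem~\ref{thm:local-rep} that there are no large projective geometry minors over larger fields. The Scum Theorem then gives $N=M/C_0$ with a spanning $\PG(t+d-1,q)$-restriction. Theorem~\ref{thm:local-rep} gives $C_1$ with $|C_1|\le d$ such that every rank-$t$ restriction of $N/C_1$ is $\GF(q)$-representable.

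The perturbation is then removed by skewness alone. For a $t$-element independent set $B$ of $N/C_1$, the flat $\cl_M(B)$ is skew to $C_0\cup C_1$ and disjoint from it because $M$ is simple. Proposition~\ref{prop:skew-restriction} then gives $M|\cl_M(B)=(N/C_1)|\cl_M(B)$. Your proposal has no counterpart to the maximal choice of $q$ or to this lifting. So even for $a\le 2$, where lines really are bounded, it does not yet amount to a proof.
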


The case $a=b$ is vacuous at sufficiently large rank, since excluding $U_{a,a}$ forces the rank to be less than $a$. For $a<b$, let $s=\max\{2,a,b-a\}$. Contracting $s-a$ elements of $U_{s,2s}$ and deleting $s+a-b$ further elements gives $U_{a,b}$. Thus every matroid with no $U_{a,b}$-minor also has no $U_{s,2s}$-minor. Consequently, Theorem~\ref{thm:unavoidable} follows immediately from the following.

\begin{theorem}\label{thm:balanced-uniform-free}
For all integers $s\ge2$ and $n\ge1$, there is an integer $R_{\ref{thm:balanced-uniform-free}}(s,n)$ such that every simple matroid $M$ with no $U_{s,2s}$-minor and $r(M)\ge R_{\ref{thm:balanced-uniform-free}}(s,n)$ has a rank-$n$ flat $F$ such that $M|F$ is either free or isomorphic to a projective or affine geometry over a finite field.
\end{theorem}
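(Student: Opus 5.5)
We prove Theorem~\ref{thm:balanced-uniform-free} by combining structure theory for matroids excluding a uniform minor with Theorem~\ref{thm:finite-field}.

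\textbf{Step 1: density dichotomy.} Excluding $U_{s,2s}$ bounds line lengths. A long line contains $U_{2,\ell}$, and a long line in a matroid of sufficiently large rank can be blown up to $U_{s,2s}$ via a free-ish extension argument. We use the Geelen--Nelson growth-rate theorem for minor-closed classes excluding a line: such a class either has linear density or contains all $\GF(q)$-representable matroids for some prime power $q$. In our class $U_{s,2s}$ is excluded, and $U_{s,2s}$ is $\GF(q)$-representable once $q\ge 2s$. Hence only finitely many fields are possible, the extremal function is $O(q_{\max}^r)$, and the class is contained in a finite union of Geelen--Kung--Whittle type classes.

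\textbf{Step 2: reduction to a dense piece.} Let $h(k)$ be the maximum size of a simple rank-$k$ matroid in the class. We iteratively pass to flats where the density is nearly extremal. If $M$ has a large-rank flat that is independent, we are done. Otherwise a Ramsey/density-increment argument gives a large-rank flat $F$ with $|F|\ge c\,q^{r(F)}$, where $q>1$ is the base of the exponential growth rate of some subclass. The point is that free-ish sparse flats yield free rank-$n$ flats by a greedy argument: a simple matroid of bounded density in which every small flat is ``nearly free'' contains a large independent flat. This splits into two cases, one where a bounded-size ``spike'' structure persists and one where it does not.

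\textbf{Step 3: representability of the dense piece.} By the Geelen--Nelson structure theorem for dense matroids in minor-closed classes (the ``matroids denser than a clique'' / projective-geometry-minor results), a very dense $M|F$ of huge rank has a large-rank $\PG(m-1,q)$-minor. Moreover, after contracting a bounded set $C$ and applying a bounded-rank perturbation, it becomes $\GF(q)$-representable. We then use the fact that restriction to a flat of $M/C$ in general position relative to the perturbation yields a genuinely $\GF(q)$-representable restriction $M|F'$ of huge rank. This relies on the standard argument that a bounded-rank perturbation can be avoided by passing to a flat skew to a bounded set; that argument needs care because the perturbation is defined via contraction, so we instead take a flat of $M$ skew to $C$ on which $M$ and $M/C$ agree. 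Applying Theorem~\ref{thm:finite-field} to the simple $\GF(q)$-representable matroid $M|F'$ produces a rank-$n$ flat of $M|F'$, which is also a flat of $M$, with the required restriction.

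\textbf{Main obstacle.} The hard part is Step 3: guaranteeing that a flat of $M$, not merely of a minor, is representable. Structure theorems give representability only up to minors and bounded perturbations. We first try to handle this with a Ramsey argument over the bounded perturbation data: colour rank-$k$ flats by the isomorphism type of their interaction with $C$, and find a monochromatic large subgeometry where the perturbation is trivial. If that fails, we fall back on a direct induction on rank using the pigeonhole on lines of length at most $2s-1$.
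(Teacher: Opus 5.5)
There is a genuine gap, and it starts with the first sentence of Step~1: excluding $U_{s,2s}$ does \emph{not} bound line lengths once $s\ge3$. Every rank-$2$ matroid lies in $\cB(s)$. So does $U_{2,\ell}\oplus U_{m,m}$ for all $\ell$ and $m$, because $U_{s,2s}$ is connected and neither summand has it as a minor. Taking minors cannot ``blow up'' a line into $U_{s,2s}$. Your extremal function therefore already has $h(2)=\infty$. Two of your tools then fail: the growth-rate theorem for classes excluding a line, and the structure theorems for nearly extremal matroids that you invoke in Step~3, since both are formulated via point counts and presuppose an excluded line. Your fallback of pigeonholing on lines of length at most $2s-1$ fails for the same reason. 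Step~1 is valid only for $s=2$ (binary matroids), which is already Theorem~\ref{thm:finite-field} with $q=2$. Even with bounded lines, Step~2 is not an argument. The claim that a sparse matroid has a large independent flat ``by a greedy argument'' is exactly the nontrivial Nelson--Norin theorem, and the density increment giving $|F|\ge c\,q^{r(F)}$ is unjustified.

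The missing idea is to replace point counts by the $(s-1)$-covering number. The paper adapts the Nelson--Norin contraction argument to show $\tau_a(M)\ge2^{\lceil r/(k-1)\rceil-a}$ when $M$ is simple of rank $r$ with no rank-$k$ independent flat (Lemma~\ref{lem:exponential-covering}). Nelson's theorem (Theorem~\ref{thm:covering-growth}) with $a=s-1$ needs no line bound, and its third outcome is ruled out by $U_{s,2s}$. So without a large projective geometry minor the covering number would be polynomial, which is a contradiction.

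Step~3 also does not close as written. Agreeing with $M/C$ on a skew flat only helps if \emph{every} rank-$t$ restriction of $M/C$ is representable. A bounded-rank perturbation, which may involve lifts, gives no such statement, and your Ramsey colouring of ``interaction data'' is unspecified. The paper obtains exactly this uniform statement from the Geelen--Nelson local representability lemma (Theorem~\ref{thm:local-rep}):
\begin{enumerate}
\item In a putative family of counterexamples with unbounded projective geometry minors, choose the largest $q<2s$ for which $\PG(r-1,q)$-minors occur for all $r$. Then projective geometry minors over larger fields have bounded rank.
\item Use the scum theorem to pass to a contraction-minor $N=M/C_0$ with a spanning $\PG$-restriction.
\item Apply the lemma to get a bounded $C_1$ such that every rank-$t$ restriction of $N/C_1$ is $\GF(q)$-representable.
\item Take $\cl_M(B)$ for a $t$-element independent set $B$ of $N/C_1$. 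It is skew to $C_0\cup C_1$, so $M|\cl_M(B)$ equals a rank-$t$ restriction of $N/C_1$.
\end{enumerate}
Your final step matches the paper: apply Theorem~\ref{thm:finite-field} inside a representable flat whose rank is at least the maximum threshold over the finitely many $q<2s$. Everything feeding into it needs to be replaced along these lines.
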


We use the following representability theorem to prove Theorem~\ref{thm:balanced-uniform-free} by applying Theorem~\ref{thm:finite-field} inside the resulting flat. For an integer $s\ge2$, let $\cB(s)$ be the class of matroids with no $U_{s,2s}$-minor.

\begin{theorem}\label{thm:representable-flat}\label{thm:rep-flat}\label{thm:bounded-representable-flat}
For all integers $s,t\ge2$, there is an integer $R_{\ref{thm:representable-flat}}(s,t)$ such that every matroid $M\in\cB(s)$ with $r(M)\ge R_{\ref{thm:representable-flat}}(s,t)$ has a rank-$t$ flat $H$ for which $M|H$ is $\GF(q)$-representable for some prime power $q<2s$.
\end{theorem}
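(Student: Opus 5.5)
We plan to derive Theorem~\ref{thm:representable-flat} from the structure theory for minor-closed classes excluding a uniform minor, namely the growth rates theorem of Geelen, Kung, and Whittle and the density Hales--Jewett style results of Geelen and Nelson. First note that $\cB(s)$ excludes $U_{2,2s}$ when $s\ge2$, since $U_{2,2s}$ is obtained from $U_{s,2s}$ only if $s=2$. In general, we instead use that every matroid in $\cB(s)$ has no $U_{2,\ell+2}$-minor for some $\ell=\ell(s)$, or else argue directly. Indeed, a long line $U_{2,m}$ with $m$ huge, together with $s-2$ further elements in general position, would already give trouble. To keep things clean, we split into two cases according to whether $M$ has a $U_{2,L}$-restriction on a large line, with $L=L(s,t)$ chosen large.

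\textbf{Case 1: $M$ has no $U_{2,L+2}$-minor.} Then $\cB(s)\cap\{$no $U_{2,L+2}$-minor$\}$ has growth rate at most exponential, $|\si(M)|\le c\,q^{r(M)}$, by the Growth Rates Theorem. Since $U_{s,2s}$ is not $\GF(q)$-representable for $q<2s-1$, no large projective geometry over a field of order $\ge 2s-1$ is a minor. The key structural input is the theorem of Geelen and Nelson that a matroid of huge rank in such a class either has a bounded-rank ``spanning clique'' perturbation of a $\GF(q)$-representable matroid, or has a rank-$t$ flat that is free or ``small.'' We would apply the Geelen--Nelson dichotomy: either (a) $M$ is sparse, $|M|\le c\,r(M)$, or more generally density low; or (b) $M$ has a $\PG(t'-1,q)$-minor for the largest such $q$. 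In case (a), a standard averaging argument (pick a maximal set of elements, contract to reduce density) gives a rank-$t$ flat $H$ with $M|H$ free. This is trivially $\GF(q)$-representable for any $q$, e.g.\ $q=2<2s$. In case (b), we need $q<2s$, which holds since $\PG(2s-1,q)$ with $q\ge 2s-1$ contains $U_{s,2s}$: a $q$-arc of size $2s\le q+1$ exists in $\PG(s-1,q)$. We then use the Geelen--Nelson structure theorem: $M$ has a minor, and after a bounded-rank contraction a large-rank restriction, that is a bounded-rank perturbation of a $\GF(q)$-represented frame matroid. Bounded perturbations can be killed by passing to a flat of rank $t$ avoiding the perturbation's effect, using that the perturbation has rank $\le k$ so some flat of large rank is unaffected. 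We must convert ``minor'' to ``flat.'' To do this, use that contracting a set $C$ of bounded rank, any flat of $M/C$ disjoint from $\cl(C)$ lifts to a flat of $M$ only if chosen skew to $C$. Hence we select $H$ inside a complement of $\cl_M(C)$ of rank $r(M)-r(C)$, via a Ramsey/pigeonhole argument on the bounded number of ways elements interact with $C$.

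\textbf{Case 2: $M$ has a $U_{2,L+2}$-minor for huge $L$.} Then we use the Geelen--Kabell/Kung argument: a long line minor together with large rank yields a $U_{s,2s}$-minor. Indeed, $U_{s,2s}$ is a minor of any matroid containing a $U_{2,m}$-minor and an $(s-2)$-element co-independent extension, for $m$ large, after repeatedly extending the rank. Since $r(M)$ is large and $M$ is simple, connectivity reductions (taking a large-rank connected component, or noting that direct sums preserve the property we want) let us build $U_{s,2s}$. This is a contradiction, so Case 2 does not occur. More carefully, it is known (Geelen) that $\cB(s)$ excludes $U_{2,\ell}$ for $\ell$ bounded in terms of $s$ in matroids of rank $\ge s$ that are sufficiently connected. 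We would reduce to vertically $k$-connected matroids by taking a flat of large rank that is ``highly connected,'' standard in the growth rates literature.

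The main obstacle is the flat-versus-minor conversion in Case 1(b): the structure theorems give representable \emph{minors}, and turning a bounded-rank contraction into a genuine rank-$t$ flat on which $M$ itself is representable requires an additional Ramsey argument. We would try first a pigeonhole over the $\le q^{k}$ cosets of the contracted set, projecting onto $\cl(C)$, to find a flat skew to $C$ that lifts faithfully.
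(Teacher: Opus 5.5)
Your proposal has a genuine gap at the case split. For $s\ge3$ the class $\cB(s)$ contains every line. Indeed, $U_{2,m}$ has rank $2<s$, so it has no $U_{s,2s}$-minor. Since $U_{s,2s}$ is connected, $U_{2,m}\oplus U_{r,r}$ is then a simple member of $\cB(s)$ with arbitrarily large rank and an arbitrarily long line.

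This breaks several steps. Your opening claim that $\cB(s)$ excludes $U_{2,2s}$ (or some $U_{2,\ell+2}$) is false. Your Case~2 assertion that a long-line minor plus large rank forces a $U_{s,2s}$-minor is also false. So Case~2 genuinely occurs and is left unhandled. The appeal to an unspecified line-length bound for highly connected members, and to an unspecified reduction to a highly connected flat, does not close it. The same example shows why the Growth Rates Theorem is the wrong tool here: it counts points and needs an excluded line.

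The missing idea is to count covers by rank-$(s-1)$ sets instead of points; when $s\ge3$ a long line costs only one such set. Nelson's theorem (Theorem~\ref{thm:covering-growth} with $a=s-1$) applies to any minor-closed class omitting some rank-$s$ uniform matroid. For the class of matroids with no $U_{s,2s}$-minor and no rank-$u$ projective geometry minor, it gives $\tau_{s-1}\le r^c$. The paper pairs this with an adaptation of the Nelson--Norin contraction argument (Lemma~\ref{lem:exponential-covering}): a simple matroid with no rank-$k$ independent flat has $\tau_a(M)\ge 2^{\lceil r/(k-1)\rceil-a}$. Hence a simple $M\in\cB(s)$ of large rank either has a rank-$t$ independent flat, which is $\GF(2)$-representable, or has a rank-$u$ projective geometry minor. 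No case split on lines is needed.

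Even granting a large projective geometry minor, your conversion to a flat is not carried out. A structure theorem for a minor obtained by contraction \emph{and} deletion (your ``large-rank restriction'') cannot be used, because a flat of $(M/C)\backslash D$ need not be a flat of $M$. Also, a bounded-rank perturbation acts globally, so there is no flat that simply ``avoids'' it. The paper's route is as follows.
(1) Assume counterexamples have arbitrarily large projective geometry minors. Choose the largest field order $q<2s$ that occurs for all ranks, so that projective geometry minors over larger fields have rank below some fixed $c$.
(2) Use the Scum Theorem to get a contraction-minor $N=M/C_0$ with a spanning $\PG(r(N)-1,q)$-restriction.
(3) Apply the Geelen--Nelson local representability lemma (Theorem~\ref{thm:local-rep}) to get a bounded set $C_1$ such that \emph{every} rank-$t$ restriction of $N/C_1$ is $\GF(q)$-representable.
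Then for any $t$-element independent set $B$ of $N/C_1$, the flat $\cl_M(B)$ is skew to $C_0\cup C_1$. Proposition~\ref{prop:skew-restriction} gives $M|\cl_M(B)=(N/C_1)|\cl_M(B)$, which is representable.

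Your remark that flats skew to the contracted set lift faithfully is the right instinct. But without step (3) there is nothing to lift, and with it no Ramsey or coset-pigeonhole selection is needed. Two smaller points: the ``sparse implies free flat'' step is the Nelson--Norin theorem, not a routine averaging argument. And since the statement covers arbitrary $M\in\cB(s)$, you should pass to $\si(M)$ and take closures at the end.
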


We note that the bound on $q$ allows us to take a finite maximum of the rank thresholds in Theorem~\ref{thm:finite-field}.

Nelson and Norin~\cite[Theorem~1]{NN21} proved that excluding a large independent flat forces an exponential lower bound on the number of points in terms of rank. We adapt their argument to obtain an exponential lower bound on the number of sets of bounded rank needed to cover the ground set. Combining this bound with a result of Nelson~\cite[Theorem~1.1]{N15} gives a large projective geometry minor in every sufficiently high-rank simple matroid in $\cB(s)$ with no large independent flat.

The main difficulty in proving Theorem~\ref{thm:representable-flat} is passing from a projective geometry minor to a flat of the original matroid. Lemma~\ref{lem:representable-flat} is the central step of our argument and may be of independent interest. For fixed integers $s,t\ge2$, it shows that a simple matroid in $\cB(s)$ with a sufficiently large projective geometry minor has a rank-$t$ flat representable over a field with fewer than $2s$ elements.

We give the required tools and the exponential covering argument in Section~\ref{sec:inputs}, and prove Lemma~\ref{lem:representable-flat} in Section~\ref{sec:lifting}. Section~\ref{sec:main-proof} contains the proofs of Theorems~\ref{thm:representable-flat} and~\ref{thm:balanced-uniform-free}. Section~\ref{sec:consequences} presents some Ramsey and structural consequences.

\section{Preliminaries}\label{sec:inputs}\label{sec:tools}

We follow Oxley~\cite{Oxley} for matroid terminology. All matroids considered in this paper are finite and may have loops or parallel elements unless simplicity is explicitly assumed. We use $\PG(n-1,q)$ and $\AG(n-1,q)$ for the projective and affine geometries of rank $n$ over $\GF(q)$. We write $\si(M)$ for a simplification of $M$. A \emph{contraction-minor} of $M$ is a matroid $M/C$ for some $C\subseteq E(M)$. An \emph{independent flat} is a flat that is an independent set. Two sets $X,Y\subseteq E(M)$ are \emph{skew} if $r_M(X\cup Y)=r_M(X)+r_M(Y)$. For an integer $a\ge1$, the \emph{$a$-covering number} $\tau_a(M)$ is the minimum number of sets of rank at most $a$ whose union is $E(M)$.

We use the following theorem of Nelson~\cite[Theorem~1.1]{N15}, which refines an earlier result of Geelen and Nelson~\cite[Theorem~1.1]{GN15}.

\begin{theorem}\label{thm:covering-growth}
Let $a\ge1$ be an integer, and let $\mathcal M$ be a minor-closed class of matroids. There is an integer $c\ge1$ such that at least one of the following holds.
\begin{enumerate}
\item $\tau_a(M)\le r(M)^c$ for every $M\in\mathcal M$ with $r(M)\ge2$.
\item There is a prime power $q$ such that $\mathcal M$ contains every $\GF(q)$-representable matroid and $\tau_a(M)\le cq^{r(M)}$ for every $M\in\mathcal M$ with $r(M)\ge2$.
\item $\mathcal M$ contains every rank-$(a+1)$ uniform matroid.
\end{enumerate}
\end{theorem}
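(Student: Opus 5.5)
This statement is Nelson's refinement~\cite[Theorem~1.1]{N15} of the Geelen--Nelson growth-rate theorem for $a$-covering numbers. I would prove it by a trichotomy on which uniform matroids and projective geometries the class $\mathcal M$ contains.

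First suppose (iii) fails, so $\mathcal M$ omits $U_{a+1,b}$ for some $b\ge a+2$. The first step is a Kung-type bound for the covering number: $\tau_a(M)\le \beta^{r(M)}$ for every $M\in\mathcal M$, with $\beta$ depending only on $a$ and $b$. I would prove this by induction on rank, contracting a point $e$ and covering $E(M)$ by lifts of the rank-$\le a$ sets covering $M/e$. Since $M$ has no $U_{a+1,b}$-minor, each lift should split into a bounded number of sets of rank at most $a$.

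Next, let $\mathcal P$ be the set of prime powers $q$ for which $\mathcal M$ contains every $\GF(q)$-representable matroid. The bound $\tau_a(\PG(r-1,q))\ge (q^r-1)/(q^{a}-1)$, together with the exponential upper bound from the first step, shows that $\mathcal P$ is finite. If $\mathcal P$ is nonempty, let $q$ be its maximum. Every prime power $q'>q$ then has some $k(q')$ with $\PG(k(q')-1,q')\notin\mathcal M$; the same holds for every prime power when $\mathcal P$ is empty. The core input is a Geelen--Nelson density theorem of the following form. If $\tau_a(M)$ exceeds $r(M)^c$ (when $\mathcal P=\emptyset$), or exceeds $cq^{r(M)}$ (when $q=\max\mathcal P$), then $M$ has a $\PG(k-1,q')$-minor for some $q'$ outside the allowed range. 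I would first prove this for a single large $k$ and the finitely many relevant $q'$, and then deduce (i) or (ii).

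\textbf{Polynomial regime.} For the case $\mathcal P=\emptyset$, the plan is the standard ``dense $\Rightarrow$ projective geometry'' argument. Superpolynomial growth of $\tau_a$ lets us find, after contracting a bounded set, a minor $N$ in which some rank-$\le a$ set is covered many times by ``parallel'' sets. Iterating this produces a large $\PG(k-1,q')$-minor for some $q'$ that is bounded in terms of $a$ and $b$, via a Hales--Jewett/Ramsey step and the Kung bound from the first step.

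\textbf{Exponential regime (main obstacle).} The hard step is the case $q=\max\mathcal P$, where one must show $\tau_a(M)\le cq^{r(M)}$, with base exactly $q$ rather than $q+\epsilon$. First, I would reduce to matroids that are vertically highly connected and contain a spanning-ish $\PG(k-1,q)$-minor. Then I would use a stabilization/template argument: a contraction $M/C$ with $|C|$ bounded has a large $\PG(k-1,q)$ restriction, and the elements of $M$ outside its closure project to boundedly many ``extra'' points. If $\tau_a(M)\ge cq^{r(M)}$ for huge $c$, these extra points must be numerous. A Hales--Jewett argument on the cosets of the projective geometry should then yield a $\PG(k'-1,q')$-minor with $q'>q$, contradicting maximality of $q$. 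I expect controlling the lifting, so that the excess over $q^{r}$ is additive rather than multiplicative, to be the main difficulty. For it I would follow the Geelen--Nelson ``matroids denser than a projective geometry'' machinery (frame templates and bounded perturbations).
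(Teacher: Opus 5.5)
The paper does not prove this statement. It imports it from Nelson~\cite[Theorem~1.1]{N15} and uses it only as a black box in Corollary~\ref{cor:pg-minor}. Judged as a proof, your proposal gets the easy reductions right but has a gap at the theorem's core.

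What you actually carry out is correct:
\begin{enumerate}
\item Excluding $U_{a+1,b}$ gives $\tau_a(M)\le\binom{b-1}{a}^{r(M)}$. In a rank-$(a+1)$ restriction, a maximal set $A$ with $M|A\cong U_{a+1,|A|}$ has $|A|<b$, and the closures of the $a$-subsets of $A$ cover everything.
\item Comparing this with $\tau_a(\PG(r-1,q))\ge(q^r-1)/(q^a-1)$ shows that $\mathcal P$ is finite.
\end{enumerate}
So everything correctly reduces to the following: if $\tau_a(M)>r(M)^c$ (respectively $>cq^{r(M)}$ with $q=\max\mathcal P$), then $M$ has a $\PG(k-1,q')$-minor with $q'$ outside the allowed range. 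But this ``core input'' is not an auxiliary lemma; it is exactly the contrapositive of (i) and (ii). Neither of your regime paragraphs proves it. Both describe hoped-for mechanisms and end by deferring to the Geelen--Nelson machinery, which is the very literature being cited.

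The one concrete mechanism you offer for the exponential regime also does not do what you need. A Hales--Jewett argument on cosets of a $\GF(q)$-geometry finds structure over $\GF(q)$ itself, since a combinatorial line in $\GF(q)^n$ is an affine $\GF(q)$-line. Density Hales--Jewett results in this area are used to find $\GF(q)$-geometries under the hypothesis that no larger-field projective geometry minor exists, not to produce one. The step from ``many excess points outside a spanning $\GF(q)$-structure'' to a $\PG(k'-1,q')$-minor with $q'>q$ is the missing idea, and your sketch does not identify it.

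Note also that (ii) demands a constant factor. The argument must therefore rule out growth such as $r(M)\,q^{r(M)}$, not merely $(q+\epsilon)^{r(M)}$.

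As it stands, the proposal is a correct reduction plus a citation. In the context of this paper, the right move is simply to cite~\cite{N15}, as the author does.
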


To pass from a projective geometry minor to a representable flat, we also use the following result of Geelen and Nelson~\cite[Lemma~7.2]{GN17}, which we refer to as the local representability lemma.

\begin{theorem}\label{thm:local-rep}
There is a function $f_{\ref{thm:local-rep}}$ from triples of integers at least two to nonnegative integers with the following property. Let $s,h,t\ge2$, let $q$ be a prime power, and let $N$ have a spanning $\PG(r(N)-1,q)$-restriction. If $N$ has no $U_{s,2s}$-minor and no rank-$h$ projective geometry minor over a finite field of order greater than $q$, then there is a set $C\subseteq E(N)$ with $|C|\le f_{\ref{thm:local-rep}}(s,h,t)$ such that every rank-$t$ restriction of $N/C$ is $\GF(q)$-representable.
\end{theorem}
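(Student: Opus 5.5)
This is the local representability lemma of Geelen and Nelson, which the paper quotes without proof. If I were to prove it, I would use a greedy-contraction argument with a Ramsey-type amplification at the end. Fix $s,h,t$, and let $P$ be the spanning $\PG(r(N)-1,q)$-restriction of $N$. The plan is to build $C$ in rounds. Suppose that after some round there is still a rank-$t$ restriction $N/C|Y$ that is not $\GF(q)$-representable. Then $Y$ contains an excluded minor for $\GF(q)$-representability. Since $N$ has no $U_{s,2s}$-minor, I would first show that this obstruction can be certified by a set $D$ of size bounded in terms of $s$ and $t$. This set consists of a bounded number of points of $Y$ together with a bounded-size contraction set whose contraction produces the obstruction. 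I would then add $D$ to $C$. If this process stops after boundedly many rounds, it yields $f(s,h,t)$. So everything reduces to showing that it cannot continue for too long.

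Suppose instead that we obtain many certificates $D_1,\dots,D_k$. The next step uses the fact that $P$ is a huge projective geometry spanning $N$. After passing to a subcollection and contracting a bounded set inside $P$, I would make the certificates mutually skew, with each $D_i$ sitting inside its own bounded-rank flat $F_i$ of $P$. I would get this by a pigeonhole/Ramsey argument on the flats $\cl_N(D_i)\cap E(P)$. Since each $D_i$ is spanned by $P$, each non-$P$ element is determined, up to an unbounded "type", by how it sits on lines and planes of $P$. Bounding the number of types relative to $q$ and $s$ then lets me take $k'$ skew certificates of the same type.

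The final step is amplification, and I expect it to be the main obstacle. The idea is to combine $k'$ same-type skew obstructions through the projective frame of $P$: contract elements joining them so that their non-$\GF(q)$ behaviour accumulates. Concretely, a point $e\notin E(P)$ whose lines to $P$ are not consistent with any $\GF(q)$-extension gives, after contraction, a line carrying more than $q+1$ points or cross-ratio-like data. Repeating this in skew copies should give one of two outcomes. The first is a large uniform configuration, from which I would extract a $U_{s,2s}$-minor. The second is a consistent extension of the frame, which is then a representation of a rank-$h$ projective geometry over a larger field $\GF(q')$ with $q'>q$. The first thing I would try is to reduce to the rank-$3$ case, namely planes of $P$ with one extra point. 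There I would classify the possible extensions, showing each is either $\GF(q)$-representable or yields a longer line. I would then lift via skewness to rank $h$. Either outcome contradicts the hypotheses, so $k$ is bounded, and this bounds $|C|$.
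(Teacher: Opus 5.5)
The paper does not prove this statement. It imports it as Lemma~7.2 of Geelen and Nelson, so your outline has to stand on its own, and it does not: there is a genuine gap. Your greedy reduction is fine; each round you may simply contract a basis of an offending rank-$t$ set. But that only reduces the lemma to bounding the number of rounds independently of $r(N)$, and that bound is the whole content of the lemma.

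Your proposed mechanism fails at its first concrete step. A one-point extension of a plane of $P$ that is not $\GF(q)$-representable yields only a long line: either a $U_{2,q+2}$-restriction, or a $U_{2,q^2+q+1}$-minor after contracting a freely placed point. When $s\ge3$ and $h\ge3$, such rank-$2$ minors are compatible with the hypotheses. Moreover, a single such defect is exactly what $C$ is allowed to absorb, by contracting a point of the line. So ``either outcome contradicts the hypotheses'' is false. ``Lifting via skewness'' cannot repair this, since by Proposition~\ref{prop:skew-restriction} mutually skew pieces do not affect one another.

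The following example shows what is really needed. Let $L_1,\dots,L_k$ be lines of $P=\PG(r-1,q)$ with $r_P(L_1\cup\dots\cup L_k)=2k$. Over $\GF(q)(x)$, with $x$ transcendental, add $e_i=a_i+xb_i$, where $a_i,b_i$ represent two points of $L_i$. This is the most favourable situation for your amplification:
\begin{enumerate}
\item Every defect $L_i\cup\{e_i\}\cong U_{2,q+2}$ has the same type, and the defects fit together consistently.
\item Yet for $h\ge3$ the resulting $N$ has no rank-$h$ projective geometry minor over a field of order greater than $q$, because $\GF(q)(x)$ contains no larger finite field. So a consistent extension of the frame does not give your second outcome.
\item The sets $L_i\cup\{e_i\}$ are mutually skew, so submodularity shows that any $C$ fails to be skew to at most $r_N(C)$ of them. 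Hence, once $r(N)$ is large, for every $C$ with $|C|<k$ some $L_i\cup\{e_i\}$ is still a $U_{2,q+2}$-restriction of $N/C$.
\end{enumerate}
For this $N$, the lemma therefore forces a $U_{s,2s}$-minor as soon as $k>f_{\ref{thm:local-rep}}(s,h,t)$. Producing that uniform minor from many skew, individually harmless defects is the missing idea. Your phrase ``contract elements joining them so that their non-$\GF(q)$ behaviour accumulates'' restates this goal rather than proving it.

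Your preparatory steps are also unsupported. Certificates found in successive contractions need not be skew in $N$. An element of $N$ can be freely placed in a flat of $P$ of unbounded rank, so there is no bounded set of types to pigeonhole on. Your reduction to planes of $P$ misses such elements entirely.
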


An equivalent form of the following fact is stated in~\cite[p.~79]{GN17}. It follows from the fact that $U_{s,q}$ is a restriction of $\PG(s-1,q)$ when $2\le s\le q$.

\begin{proposition}\label{prop:field-bound}\label{lem:field-bound}
Let $s\ge2$ be an integer, and let $M$ be a matroid with no $U_{s,2s}$-minor. If $M$ has a minor $N\cong\PG(d-1,q)$, where $d\ge s$ and $q$ is a prime power, then $q<2s$.
\end{proposition}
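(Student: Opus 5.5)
We prove the contrapositive: suppose $q\ge 2s$ and produce a $U_{s,2s}$-minor of $M$. Since minors of minors are minors, it suffices to find $U_{s,2s}$ as a minor of $N\cong\PG(d-1,q)$. The plan has three steps: reduce to rank $s$, find a large uniform restriction there, then delete down to $U_{s,2s}$.

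First we reduce the rank. Since $d\ge s$, contract a flat of rank $d-s$ in $N$. Contracting a flat $F$ of a projective geometry and simplifying gives the projective geometry on the quotient space, so $\si(N/F)\cong\PG(s-1,q)$. This is a minor of $N$, because simplification only deletes loops and parallel elements.

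Next we find $U_{s,q}$ inside $\PG(s-1,q)$. Over $\GF(q)$, take the points $(1,x,x^2,\dots,x^{s-1})$ for $x\in\GF(q)$. Any $s$ of them are linearly independent by the Vandermonde determinant, since $s\le q$. These $q$ points therefore form a $U_{s,q}$-restriction; this is the fact quoted before the statement. If needed, the point $(0,\dots,0,1)$ can be added to obtain $U_{s,q+1}$, but $U_{s,q}$ already suffices.

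Finally, since $q\ge 2s$, deleting $q-2s$ of these points leaves $U_{s,2s}$. This is a minor of $N$, hence of $M$, which is a contradiction. So $q<2s$.

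No step here is a real obstacle. The only point needing care is checking that contraction followed by simplification stays within minors and yields exactly $\PG(s-1,q)$. That holds because projective geometries are closed under taking quotients by flats.
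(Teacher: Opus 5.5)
Your proof is correct and follows the same route as the paper. The paper also reduces to $\PG(s-1,q)$, uses its $U_{s,q}$-restriction for $2\le s\le q$ (which you justify via the Vandermonde construction), and deletes down to $U_{s,2s}$ when $q\ge 2s$.
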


We also use the following elementary fact about skew sets.

\begin{proposition}\label{prop:skew-restriction}\label{lem:skew-lift}
Let $M$ be a matroid, and let $A,B\subseteq E(M)$ be disjoint skew sets. Then $M|A=(M/B)|A$.
\end{proposition}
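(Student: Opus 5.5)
The plan is to prove directly that $r_{M/B}(X) = r_M(X)$ for every $X \subseteq A$. This is enough: a restriction to $A$ is determined by its rank function on subsets of $A$, and $E((M/B)|A) = A = E(M|A)$ because $A \cap B = \emptyset$.

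Fix $X\subseteq A$. By the definition of contraction, $r_{M/B}(X)=r_M(X\cup B)-r_M(B)$, so it suffices to show $r_M(X\cup B)=r_M(X)+r_M(B)$. In other words, we need every subset of $A$ to be skew to $B$.

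The upper bound $r_M(X\cup B)\le r_M(X)+r_M(B)$ is subadditivity, a special case of submodularity. For the lower bound, apply submodularity to the sets $X\cup B$ and $A$. Their union is $A\cup B$ and their intersection is $X$, using $X\subseteq A$ and $A\cap B=\emptyset$. This gives
\[
r_M(X\cup B)+r_M(A)\ \ge\ r_M(A\cup B)+r_M(X)=r_M(A)+r_M(B)+r_M(X),
\]
where the equality is the skewness hypothesis on $A$ and $B$. Cancelling $r_M(A)$ yields $r_M(X\cup B)\ge r_M(X)+r_M(B)$, so equality holds.

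Therefore $r_{M/B}(X)=r_M(X)$ for all $X\subseteq A$, and hence $M|A=(M/B)|A$. The only real step is noticing that skewness passes from $A$ to its subsets, which is exactly what the single submodularity inequality above provides; the rest is routine.
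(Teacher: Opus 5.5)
Your proof is correct and complete. The paper gives no proof of this proposition; it calls it an elementary fact. Your argument, which uses submodularity to show that every subset of $A$ is skew to $B$ and then compares rank functions via $r_{M/B}(X)=r_M(X\cup B)-r_M(B)$, is the standard verification the paper leaves to the reader.
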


\subsection{Independent flats and covering numbers}\label{sec:projective-minors}\label{sec:covering-density}

Nelson and Norin~\cite[Lemma~4]{NN21} observed that independent flats in simplifications of contraction-minors lift to independent flats of the original simple matroid. We include the proof.

\begin{lemma}\label{lem:independent-flat-contraction}
Let $M$ be simple, let $C\subseteq E(M)$, and let $N=\si(M/C)$. Every independent flat of $N$ is an independent flat of $M$.
\end{lemma}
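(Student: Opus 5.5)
The plan is to work directly with ranks and closures. We identify the ground set of $N=\si(M/C)$ with a set of representatives $E(N)\subseteq E(M)\setminus\cl_M(C)$, one from each parallel class of non-loops of $M/C$. Let $I$ be an independent flat of $N$. We must show two things: $I$ is independent in $M$, and $\cl_M(I)=I$.

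Independence is routine. Since $I$ is independent in $N$, it is independent in $M/C$, so $r_M(I\cup C)-r_M(C)=|I|$. Submodularity gives $r_M(I)\ge r_M(I\cup C)-r_M(C)=|I|$, hence $I$ is independent in $M$.

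For the flat property, suppose $e\in\cl_M(I)\setminus I$; we aim for a contradiction.
\begin{enumerate}
\item Since $\cl_M(I)\subseteq\cl_{M/C}(I)$ on $E(M)\setminus C$, we get $e\in\cl_{M/C}(I)$. In particular $e\notin C$, because elements of $C$ are not in the ground set of $M/C$.
\item If $e$ is a loop of $M/C$, then $e\in\cl_M(C)$. Otherwise $e$ is parallel in $M/C$ to some representative $f\in E(N)$. Then $f\in\cl_N(I)=I$, since $I$ is a flat of $N$.
\item We next rule out $e$ being a loop of $M/C$. Because $I$ is independent in $M/C$, the sets $I$ and $C$ are skew in $M$: $r_M(I\cup C)=r_M(I)+r_M(C)$. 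Then $r_M(I\cup C)=r_M(I)+r_M(C)$ together with $e\in\cl_M(I)\cap\cl_M(C)$ must be handled. Here the simplicity of $M$ is needed, since a single element lying in both closures of skew sets is impossible only if it is not a loop; simplicity guarantees $e$ is not a loop of $M$. Skewness gives $r(\cl(I)\cap\cl(C))\le r(I)+r(C)-r(I\cup C)=0$ by submodularity applied to the two closed sets $\cl(I),\cl(C)$. So $e$ would be a loop, which is a contradiction.
\item The remaining case is $e\parallel f$ in $M/C$ with $f\in I$ and $e\ne f$. Then $\{e,f\}$ has rank $1$ in $M/C$. Since $e\in\cl_M(I)$, we have $e\in\cl_M(I)$ with $f\in I$. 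Consider $I'=(I\setminus\{f\})\cup\{e\}$. It spans the same set as $I$ in $M/C$. This alone does not obviously contradict anything, so we argue via ranks instead. We have $r_{M/C}(\{e,f\})=1$ while $r_M(\{e,f\})=2$ by simplicity of $M$, so $\cl_M(\{e,f\})$ meets $\cl_M(C)$ in rank at least $1$. Also $\cl_M(\{e,f\})\subseteq\cl_M(I)$, so $\cl_M(I)\cap\cl_M(C)$ has positive rank. This contradicts the skewness computed in step (iii).
\end{enumerate}

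The main subtlety, and the step most likely to need care, is (iv). The rank estimate has to be stated via the modular-type inequality
\[
r(\cl I\cap \cl C)\le r(\cl I)+r(\cl C)-r(\cl I\cup\cl C),
\]
which is ordinary submodularity on flats. With that inequality both cases reduce to the skewness of $I$ and $C$ in $M$, and the lemma follows.
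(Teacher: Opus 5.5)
Your independence argument and the rank computation in step (iii) are correct, but step (iv) rests on an invalid inference. From $r_{M/C}(\{e,f\})=1<2=r_M(\{e,f\})$ you can conclude only that $\{e,f\}$ is not skew to $C$, that is, $r_M(\{e,f\})+r_M(C)-r_M(\{e,f\}\cup C)\ge1$. Submodularity bounds $r_M(\cl_M(\{e,f\})\cap\cl_M(C))$ from \emph{above} by this quantity. The lower bound you need is modularity, which holds in projective geometries but fails in general matroids. For example, take $M=U_{3,4}$ on $\{1,2,3,4\}$ with $C=\{3,4\}$. The elements $1$ and $2$ are parallel in $M/C$, yet $\cl_M(\{1,2\})\cap\cl_M(C)=\{1,2\}\cap\{3,4\}=\emptyset$. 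So you have not shown that $\cl_M(I)\cap\cl_M(C)$ has positive rank, and the promised contradiction with (iii) does not follow. Your closing remark, that both cases reduce to the displayed submodular inequality, is exactly where this goes wrong: in (iv) that inequality points the other way.

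The missing idea is that skewness is inherited by subsets. Since $r_M(\cl_M(I)\cup C)=r_M(I\cup C)=r_M(I)+r_M(C)$, the flat $H=\cl_M(I)$ is skew to $C$, and submodularity then shows every subset of $H$ is skew to $C$. Applied to $\{e,f\}\subseteq H$, this gives $r_{M/C}(\{e,f\})=r_M(\{e,f\})=2$, contradicting $e\parallel f$ in $M/C$. Applied to $\{e\}$, it also disposes of the loop case and of the possibility $e\in C$. Note that your justification of $e\notin C$ in (i) is circular, although the intersection bound in (iii) does cover that case, since $C\subseteq\cl_M(C)$. With this repair your argument becomes the paper's: $H$ is skew to $C$, so $(M/C)|H=M|H$ is simple, and therefore no element of $H-I$ can be parallel in $M/C$ to an element of $I$.
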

\begin{proof}
Let $I$ be an independent flat of $N$, and let $H=\cl_M(I)$. Since $I$ is independent in $M/C$, it is independent in $M$ and skew to $C$. Thus $H$ is skew to $C$. As $M$ is simple, $H\cap C=\emptyset$, and Proposition~\ref{prop:skew-restriction} gives $M|H=(M/C)|H$. Every element of $H$ is therefore a nonloop of $M/C$ spanned by $I$. Since $I$ is a flat of $N$, every element of $H-I$ is parallel in $M/C$ to an element of $I$. Since $(M/C)|H=M|H$ is simple, $H-I=\emptyset$. Thus $H=I$, proving the lemma.
\end{proof}

The following lemma adapts the contraction argument used by Nelson and Norin in the proof of~\cite[Theorem~5]{NN21}.

\begin{lemma}\label{lem:exponential-covering}
Let $k\ge2$, and let $M$ be a simple matroid of positive rank $r$ with no rank-$k$ independent flat. For every integer $a\ge1$,
\[
\tau_a(M)\ge 2^{\lceil r/(k-1)\rceil-a}.
\]
\end{lemma}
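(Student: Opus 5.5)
We argue by induction on $r+a$, aiming for a recursion in which the rank drops by $k-1$ and either $a$ stays the same or decreases by one. Note that
\[
\bigl\lceil (r-(k-1))/(k-1)\bigr\rceil-a=\bigl\lceil r/(k-1)\bigr\rceil-a-1 .
\]
It therefore suffices to prove a splitting inequality of the form
\[
\tau_a(M)\ \ge\ \tau_a(M_1)+\tau_{a'}(M_2),
\]
where $M_1$ and $M_2$ are simple, have rank at least $r-(k-1)$, have no rank-$k$ independent flat, and $a'\in\{a,a-1\}$ with $r(M_2)\ge r-(k-1)$ adjusted accordingly. Each term is then at least $2^{\lceil r/(k-1)\rceil-a-1}$ by induction, and the two terms sum to the claimed bound.

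For the base cases: if $\lceil r/(k-1)\rceil\le a$, the bound is at most $1$, and it holds because $E(M)\neq\emptyset$ when $r>0$. If $a\ge r$, then again $\lceil r/(k-1)\rceil\le r\le a$. So we may assume $r>(k-1)a\ge k-1$.

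\textbf{Splitting.} Since $M$ has no rank-$k$ independent flat, fix an independent set $I$ with $|I|=k-1$. Let $H$ be a flat of rank $r-(k-1)$ that is skew to $I$, obtained by extending $I$ to a basis $B$ and setting $H=\cl_M(B-I)$.
\begin{enumerate}
\item The first piece is $M_1=M|H$. This is simple of rank $r-(k-1)$. It has no rank-$k$ independent flat, because a flat of $M|H$ is a flat of $M$.
\item The second piece comes from contraction. Let $C=H$ and $N=\si(M/H)$. Then $N$ has rank $k-1$, which is too small to use directly.
\end{enumerate}
So instead we contract $I$. Set $M_2=\si(M/I)$, which has rank $r-(k-1)$. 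By Lemma~\ref{lem:independent-flat-contraction}, every independent flat of $M_2$ is an independent flat of $M$, so $M_2$ has no rank-$k$ independent flat.

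\textbf{The covering inequality.} Take an optimal cover $\mathcal X$ of $E(M)$ by sets of rank at most $a$. The sets $X\cap H$ cover $H$, so at least $\tau_a(M_1)$ of the $X$ meet $H$ in a set needed for this cover. The images of the $X$ in $M/I$ cover $E(M_2)$ and have rank at most $a$ there. The difficulty is that a single $X$ can serve both covers. The natural fix is to use the hypothesis (no rank-$k$ independent flat) to choose $I$ and $H$ so that $\cl_M(I)$ is not independent. Then $\cl_M(I)$ contains an element $e\notin I$, which lets us charge cover sets that meet both $H$ and $E(M)-H$ against the smaller parameter $a-1$. Concretely, we would show that for such $X$ the image $\cl_{M/I}(X)\cap\cl_{M/I}(H)$ loses rank, so those sets form an $(a-1)$-cover of a suitable restriction of $M_2$. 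This gives
\[
\tau_a(M)\ge\tau_a(M|H)+\tau_{a-1}(M_2'),
\]
where $M_2'$ is still a simple matroid of rank $r-(k-1)$ with no rank-$k$ independent flat. Applying induction to both terms completes the proof.

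\textbf{Main obstacle.} The main obstacle is precisely this double counting. We must choose $I$ (using the absence of a rank-$k$ independent flat) and the correct restriction of $M/I$ so that every cover set contributing to both pieces genuinely drops rank by one in the second. This is where the argument of Nelson and Norin~\cite[Theorem~5]{NN21} would be adapted. Their argument counts points, and it should be modified to charge covering sets instead.
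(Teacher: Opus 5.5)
There is a genuine gap. Your argument depends entirely on the splitting inequality $\tau_a(M)\ge\tau_a(M_1)+\tau_{a'}(M_2)$, which you never prove; you flag it yourself as the main obstacle. In the form your induction needs, with $M_1,M_2$ simple of rank at least $r-(k-1)$ and with no rank-$k$ independent flat, the inequality is false.

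Take $M=\PG(3,2)$, $k=2$, $a=2$, so $r=4$. A line spread shows $\tau_2(M)=15/3=5$. Now let $M'$ be any simple matroid of rank at least $3$ with no rank-$2$ independent flat, so every line has at least three points. Then $\tau_2(M')\ge3$. To see this, suppose flats $F_1,F_2$ of rank at most $2$ covered $M'$. Pick $x\in F_1-F_2$ and $y\in F_2-F_1$. The line $\cl_{M'}(\{x,y\})$ has a third point $z\in F_1\cup F_2$. If $z\in F_1$ then $y\in F_1$, and if $z\in F_2$ then $x\in F_2$; either way we have a contradiction. Since $\tau_{a-1}\ge\tau_a$ always, any choice of $a'\in\{a,a-1\}$ gives $\tau_a(M_1)+\tau_{a'}(M_2)\ge6>5$.

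The underlying problem is integrality. Actual covering numbers do not double exactly when the rank grows by $k-1$, so no recursion of this shape on $\tau_a$ itself can succeed. Your intended use of the hypothesis is also unavailable. With $|I|=k-1$, the flat $\cl_M(I)$ need not be dependent. For example, in $\AG(r-1,2)$ with $k=3$, every rank-$2$ flat is an independent pair, even though every rank-$3$ flat is a $4$-circuit.

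The missing idea is to induct on a dual certificate rather than on $\tau_a$. The paper shows there is a set $X\subseteq E(M)$ with $|X|\ge2^{\lceil r/(k-1)\rceil-1}$ and $|A\cap X|\le2^{r_M(A)-1}$ for every nonempty $A$. Any cover by $m$ sets of rank at most $a$ then gives $|X|\le m2^{a-1}$.

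To build $X$, contract a maximum-size independent flat $S$. The hypothesis gives $|S|\le k-1$. Maximality forces every parallel class of $M/S$ to have at least two elements, since a singleton class $\{e\}$ would make $S\cup\{e\}$ a larger independent flat. Apply induction to $\si(M/S)$, which has no rank-$k$ independent flat by Lemma~\ref{lem:independent-flat-contraction}, to get a set $Y$. Then double $Y$ by choosing a parallel partner in $M/S$ for each $y\in Y$.

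The intersection bound splits into two cases. A set $A$ skew to $S$ meets each parallel class at most once. A set $A$ not skew to $S$ loses a unit of rank in $M/S$, which pays exactly for the factor $2$. This certificate doubles exactly at each step, which is precisely what $\tau_a$ fails to do.
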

\begin{proof}
Let $b=k-1$. We first prove the following.

\begin{sublemma}\label{claim:covering-set}
There is a set $X\subseteq E(M)$ such that $|X|\ge2^{\lceil r/b\rceil-1}$ and $|A\cap X|\le2^{r_M(A)-1}$ for every nonempty set $A\subseteq E(M)$.
\end{sublemma}

We proceed by induction on $r$. If $1\le r\le b$, let $X$ consist of one element. Then $|X|=2^{\lceil r/b\rceil-1}=1$, and the intersection bound holds because $r_M(A)\ge1$.

Suppose that $r>b$. Choose an independent flat $S$ of maximum size, and let $t=|S|$. Clearly, $t\le b$, otherwise $S$ contains a rank-$k$ independent flat of $M$. Since $M$ is simple, each singleton is an independent flat, so $1\le t\le b<r$. Let $N=\si(M/S)$. Lemma~\ref{lem:independent-flat-contraction} implies that $N$ has no rank-$k$ independent flat, and $r(N)=r-t\ge1$.

Since $S$ is a flat, $M/S$ is loopless. If $\{e\}$ were a singleton parallel class, then $\cl_M(S\cup\{e\})=S\cup\{e\}$, so $S\cup\{e\}$ would be a larger independent flat, contrary to the choice of $S$. Thus every parallel class of $M/S$ has at least two elements.

By induction, there is a set $Y\subseteq E(N)$ such that $|Y|\ge2^{\lceil(r-t)/b\rceil-1}$ and $|B\cap Y|\le2^{r_N(B)-1}$ for every nonempty set $B\subseteq E(N)$. For each $y\in Y$, choose $y'\in E(M)-(S\cup\{y\})$ parallel to $y$ in $M/S$, and let $X=\bigcup_{y\in Y}\{y,y'\}$. Then
\[
|X|=2|Y|\ge2^{\lceil(r-t)/b\rceil}\ge2^{\lceil r/b\rceil-1},
\]
where the last inequality uses $t\le b$.

Let $A\subseteq E(M)$ be nonempty, and let $P$ be the set of elements of $N$ whose parallel class in $M/S$ meets $A-S$. Then $r_N(P)=r_M(A\cup S)-t$. If $P=\emptyset$, then $A\subseteq S$, so $A\cap X=\emptyset$.

Suppose that $P\ne\emptyset$. If $A$ is skew to $S$, then $r_N(P)=r_M(A)$. By Proposition~\ref{prop:skew-restriction}, $(M/S)|A=M|A$ is simple, so $A$ meets each parallel class of $M/S$ in at most one element. Consequently,
\[
|A\cap X|\le |P\cap Y|\le2^{r_N(P)-1}=2^{r_M(A)-1}.
\]
If $A$ is not skew to $S$, then $r_N(P)\le r_M(A)-1$. Each parallel class contributes at most two elements to $X$, so
\[
|A\cap X|\le2|P\cap Y|\le2^{r_N(P)}\le2^{r_M(A)-1}.
\]
Hence~\ref{claim:covering-set} holds.

Let $F_1,\ldots,F_m$ cover $E(M)$, with $r_M(F_i)\le a$ for every $i$. Thus~\ref{claim:covering-set} gives
\[
2^{\lceil r/b\rceil-1}\le |X|\le\sum_{i=1}^m|F_i\cap X|\le m2^{a-1}.
\]
It follows that $m\ge2^{\lceil r/b\rceil-a}$. This proves the lemma.
\end{proof}

Combining Lemma~\ref{lem:exponential-covering} with Theorem~\ref{thm:covering-growth} gives the required projective geometry minor.

\begin{corollary}\label{cor:pg-minor}
For all integers $s,k,n\ge2$, there is an integer $R_{\ref{cor:pg-minor}}(s,k,n)$ such that every simple matroid $M\in\cB(s)$ with no rank-$k$ independent flat and $r(M)\ge R_{\ref{cor:pg-minor}}(s,k,n)$ has a rank-$n$ projective geometry minor.
\end{corollary}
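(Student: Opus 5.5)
We apply Theorem~\ref{thm:covering-growth} with $a=s-1$ to the minor-closed class $\mathcal M=\cB(s)$, and let $c$ be the resulting integer. Outcome~(iii) cannot hold: $U_{s,2s}$ is a rank-$s$ uniform matroid, so if $\cB(s)$ contained every rank-$s$ uniform matroid it would contain $U_{s,2s}$, which is impossible. So either outcome~(i) holds, or outcome~(ii) holds for some prime power $q$. The plan is to rule out outcome~(i) by comparing it with the exponential lower bound of Lemma~\ref{lem:exponential-covering}. Then, under outcome~(ii), we use the fact that $\cB(s)$ contains every $\GF(q)$-representable matroid and show that $M$ itself contains a projective geometry minor of rank $n$.

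\textbf{Excluding outcome (i).} Let $M\in\cB(s)$ be simple with no rank-$k$ independent flat, and let $r=r(M)$. Lemma~\ref{lem:exponential-covering} with $a=s-1$ gives
\[
\tau_{s-1}(M)\ge 2^{\lceil r/(k-1)\rceil-s+1}.
\]
The constant $c$ depends only on $s$, since the class is $\cB(s)$ and $a=s-1$. Hence for $r$ larger than some $R_0(s,k)$ this exponential bound exceeds $r^c$. So if outcome~(i) held, then no such $M$ of rank at least $R_0$ would exist, and the corollary would hold vacuously for these ranks. We may therefore assume outcome~(ii): there is a prime power $q$ such that every $\GF(q)$-representable matroid lies in $\cB(s)$. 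Since $q$ is determined by the class, it depends only on $s$.

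\textbf{Producing the minor in $M$.} Outcome~(ii) alone only says that the \emph{class} contains large projective geometries; a given $M$ need not contain one. I expect this to be the main obstacle. To get the geometry inside $M$ we need a growth-rate dichotomy for individual matroids rather than for the class. The natural tool is the proof behind Theorem~\ref{thm:covering-growth}, namely Nelson~\cite{N15} and Geelen--Nelson~\cite{GN15}. In the form I would try first, it says: for a minor-closed class not containing all rank-$(a+1)$ uniform matroids, and for each $n$, every member $M$ with $\tau_a(M)>r(M)^{c'}$ has a rank-$n$ projective geometry minor. The constant $c'$ here depends on $n$.

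To get this per-matroid statement, apply Theorem~\ref{thm:covering-growth} to the subclass $\mathcal M_n$ of matroids in $\cB(s)$ with no rank-$n$ projective geometry minor, over any field. This subclass is minor-closed. It does not contain every $\GF(q)$-representable matroid for any $q$, since $\PG(n-1,q)$ is missing, so outcome~(ii) fails. Outcome~(iii) fails because $\mathcal M_n\subseteq\cB(s)$. Hence outcome~(i) holds for $\mathcal M_n$ with some constant $c_n$: every member of rank at least $2$ satisfies $\tau_{s-1}\le r^{c_n}$.

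Now take $R_{\ref{cor:pg-minor}}(s,k,n)$ large enough that $2^{\lceil r/(k-1)\rceil-s+1}>r^{c_n}$ for all $r\ge R_{\ref{cor:pg-minor}}(s,k,n)$. Let $M\in\cB(s)$ be simple, with no rank-$k$ independent flat and $r(M)\ge R_{\ref{cor:pg-minor}}(s,k,n)$. By Lemma~\ref{lem:exponential-covering}, $\tau_{s-1}(M)>r(M)^{c_n}$, so $M\notin\mathcal M_n$. Since $M\in\cB(s)$, this means $M$ has a rank-$n$ projective geometry minor.

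This subclass trick makes the earlier case analysis unnecessary: applying Theorem~\ref{thm:covering-growth} once to $\mathcal M_n$ already gives the result. The only care needed is to check that $\mathcal M_n$ is minor-closed, which holds because a minor of a minor is a minor, and that the constant $c_n$ depends only on $s$ and $n$.
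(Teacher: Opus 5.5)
Your final argument is correct and is essentially the paper's proof. Your subclass $\mathcal M_n$ is exactly the paper's class $\mathcal D$: you apply Theorem~\ref{thm:covering-growth} to it with $a=s-1$, exclude outcomes (ii) and (iii) via $\PG(n-1,q)$ and $U_{s,2s}$, and contradict the resulting polynomial bound with Lemma~\ref{lem:exponential-covering}, so the opening case analysis on $\cB(s)$ itself can simply be dropped, as you noticed.
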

\begin{proof}
Fix $s,k,n\ge2$, and let $\mathcal D$ be the class of matroids with no $U_{s,2s}$-minor and no rank-$n$ projective geometry minor. Apply Theorem~\ref{thm:covering-growth} with $a=s-1$. The class $\mathcal D$ does not contain every rank-$s$ uniform matroid, since it excludes $U_{s,2s}$. For every prime power $q$, it excludes the $\GF(q)$-representable matroid $\PG(n-1,q)$, so it does not contain all $\GF(q)$-representable matroids. Therefore there is an integer $c\ge1$, depending only on $s,n$, such that $\tau_{s-1}(N)\le r(N)^c$ for every $N\in\mathcal D$ with $r(N)\ge2$.

Choose an integer $R=R_{\ref{cor:pg-minor}}(s,k,n)\ge\max\{2,k,n\}$ such that
\[
2^{r/(k-1)-(s-1)}>r^c\qquad\text{for every integer }r\ge R.
\]
Let $M$ be a simple matroid in $\cB(s)$ of rank $r\ge R$ with no rank-$k$ independent flat. By Lemma~\ref{lem:exponential-covering},
\[
\tau_{s-1}(M)\ge2^{\lceil r/(k-1)\rceil-(s-1)}\ge2^{r/(k-1)-(s-1)}>r^c.
\]
Thus $M\notin\mathcal D$. Since $M\in\cB(s)$, it has a rank-$n$ projective geometry minor.
\end{proof}

\section{Representable flats from projective geometry minors}\label{sec:lifting}

The following lemma is the key step in our proof and may be of independent interest. It shows that a sufficiently large projective geometry minor in a simple matroid in $\cB(s)$ forces a large representable flat. The argument makes a maximal choice among finitely many field orders, as in Geelen and Nelson~\cite[Lemma~8.1]{GN17}.

\begin{lemma}\label{lem:representable-flat}\label{lem:pg-to-flat}
For all integers $s,t\ge2$, there exists an integer $u_{\ref{lem:representable-flat}}(s,t)$ such that every simple matroid $M\in\cB(s)$ with a rank-$u_{\ref{lem:representable-flat}}(s,t)$ projective geometry minor has a rank-$t$ flat $H$ for which $M|H$ is $\GF(q)$-representable for some prime power $q<2s$.
\end{lemma}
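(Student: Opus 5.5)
We would follow the maximal-field-order strategy of Geelen and Nelson~\cite[Lemma~8.1]{GN17}. By Proposition~\ref{prop:field-bound}, every projective geometry minor of rank at least $s$ of a matroid in $\cB(s)$ is over a field of order less than $2s$. So only finitely many prime powers $q_1<q_2<\cdots<q_m$ (all those less than $2s$) can occur.

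We define a decreasing sequence of ranks $h_m\ll h_{m-1}\ll\cdots\ll h_1$ by backwards recursion. Set $u_{\ref{lem:representable-flat}}(s,t)=h_0$, where $h_0$ is large compared with all of these. Given a simple $M\in\cB(s)$ with a rank-$h_0$ projective geometry minor, choose the largest index $i$ such that $M$ has a rank-$h_i$ projective geometry minor over $\GF(q_i)$. Because the $h_j$ decrease, $M$ then has no rank-$h_{i+1}$ (hence no rank-$h$, for suitable $h\le h_{i+1}$) projective geometry minor over any field of order greater than $q=q_i$. Write the minor as $M/C_0\setminus D_0$ with $C_0$ independent. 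Let $N$ be the restriction of $M/C_0$ to the closure of the $\PG(h_i-1,q)$ restriction. Then $N$ has a spanning $\PG(r(N)-1,q)$-restriction, no $U_{s,2s}$-minor, and no rank-$h$ projective geometry minor over a larger field. Theorem~\ref{thm:local-rep} gives $C_1\subseteq E(N)$ with $|C_1|\le f_{\ref{thm:local-rep}}(s,h,t')$ such that every rank-$t'$ restriction of $N/C_1$ is $\GF(q)$-representable, where $t'$ is chosen somewhat larger than $t$.

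The main obstacle is lifting this back to a flat of $M$ itself. Representable restrictions of a contraction need not be flats of $M$, and contracted elements can destroy representability only in the contraction, not in $M$. Our plan is as follows. Inside $N/C_1$, which still has a large projective geometry restriction (rank at least $h_i-f$, since $h_i$ is huge), find a rank-$t$ set $I$ that is an independent set of $M$ and skew in $M$ to $C=C_0\cup C_1$. Then let $H=\cl_M(I)$. Since $I$ is skew to $C$, Proposition~\ref{prop:skew-restriction} gives $M|H=(M/C)|H$. This set $H$ lies in the ground set of $M/C$, and $M|H$ is a restriction of rank $t$ of $M/C$. It is not yet clear that $H\subseteq E(N)$, because elements of $E(M)-E(N)$ may lie in $H$. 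To handle this, we would instead apply Theorem~\ref{thm:local-rep} to $M/C_0$ restricted to a bounded-rank-extended set, or iterate: if $\cl_{M/C}(I)$ contains elements outside the projective geometry whose addition breaks representability, we contract a witnessing element. The rank-$h$ exclusion bounds how many times this can happen, since each failure builds up a larger uniform or field structure. That is, we take $N$ to be all of $\si(M/C_0)$ restricted to the closure of the geometry, which includes every element of $M/C_0$ spanned by it, so any $H$ chosen inside that closure is covered.

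Concretely, the order of steps is as follows. First, fix $q$ by maximality. Second, apply Theorem~\ref{thm:local-rep} to $N$ taken as the restriction of $M/C_0$ to $\cl_{M/C_0}$ of the projective geometry. Third, choose $I$ inside the $\PG$ restriction of $N/C_1$ of rank $t$ and skew to $C_0\cup C_1$ in $M$. This is possible by taking a flat of the geometry avoiding $\cl(C_1)$ and using that $C_0$ is skew in $M$ to the minor's ground set. Fourth, take $H=\cl_M(I)$. We expect the delicate point to be verifying $H\subseteq\cl_{M/C_0}(\text{geometry})$ and that $M|H$ equals the corresponding rank-$t$ restriction of $N/C_1$. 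This uses the skewness and the simplicity of $M$, as in Lemma~\ref{lem:independent-flat-contraction}. It yields $\GF(q)$-representability of $M|H$ with $q<2s$.
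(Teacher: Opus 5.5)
Your plan follows the paper in outline. You bound the field orders by Proposition~\ref{prop:field-bound}, fix a maximal order $q$, apply Theorem~\ref{thm:local-rep}, and lift a rank-$t$ set back to $M$ by skewness. Your backward recursion $h_m\ll\cdots\ll h_1$ is a valid explicit replacement for the paper's argument by contradiction, with two small corrections. Take $h=h_{i+1}$ itself: excluding a rank-$h_{i+1}$ minor excludes all \emph{larger} ranks, not smaller ones, so ``suitable $h\le h_{i+1}$'' is backwards. Also keep every $h_j\ge s$, so that fields of order at least $2s$ are excluded too.

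The gap is the lifting step, which you call the main obstacle but never carry out. You worry that $H=\cl_M(I)$ may leave $E(N)$. You then propose an iteration (``contract a witnessing element \ldots\ each failure builds up a larger uniform or field structure'') for which there is no argument. Finally you leave $H\subseteq E(N)$ as an expected ``delicate point.'' Your justification of skewness, that $C_0$ is skew in $M$ to the minor's ground set, is false in general; if it held, contracting $C_0$ would not change the minor.

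The missing observation is short. The paper uses the Scum Theorem to take $N=M/C_0$ itself, with no deletion and with $r(N)$ equal to the rank of the geometry. Then the geometry is a spanning restriction and $E(N/C_1)=E(M)-(C_0\cup C_1)$. Let $B$ be any $t$-element independent set of $N/C_1$; there is no need to place it inside the geometry or to avoid $\cl(C_1)$. Independence in $M/(C_0\cup C_1)$ already makes $B$ independent in $M$ and skew to $C_0\cup C_1$. Hence $\cl_M(B)$ is skew to $C_0\cup C_1$, and by simplicity it is disjoint from $C_0\cup C_1$. So $\cl_M(B)$ lies in $E(N/C_1)$ automatically, and Proposition~\ref{prop:skew-restriction} identifies $M|\cl_M(B)$ with a rank-$t$ restriction of $N/C_1$.

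Your choice $N=(M/C_0)|\cl_{M/C_0}(E(\text{geometry}))$ also works. By the same skewness argument, $\cl_M(I)$ is disjoint from $C_0\cup C_1$, and $\cl_M(I)\subseteq\cl_M(I\cup C_0)-C_0=\cl_{M/C_0}(I)\subseteq E(N)$. You should not pass to $\si(M/C_0)$, however. There, an element of $H$ may be parallel in $M/C_0$ to a chosen representative outside $H$, so $H\not\subseteq E(N)$ and you would need an extra isomorphism argument.
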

\begin{proof}
Fix $s,t\ge2$, and let $\mathcal C$ be the class of simple matroids in $\cB(s)$ with no rank-$t$ flat representable over a field of order less than $2s$. Suppose, for a contradiction, that for every positive integer $r$, some member of $\mathcal C$ has a rank-$r$ projective geometry minor. By Proposition~\ref{prop:field-bound}, each such minor of rank at least $s$ is over a field of order less than $2s$.

Let $Q_s$ be the finite set of prime powers less than $2s$. Since $Q_s$ is finite, there is a largest $q\in Q_s$ such that, for every positive integer $r$, some member of $\mathcal C$ has a $\PG(r-1,q)$-minor. By maximality of $q$, for each $q'\in Q_s$ with $q'>q$, there is a positive integer $c(q')$ such that no member of $\mathcal C$ has a $\PG(c(q')-1,q')$-minor. Let
\[
c=\max\bigl(\{s\}\cup\{c(q'):q'\in Q_s,\ q'>q\}\bigr).
\]
Then no member of $\mathcal C$ has a rank-$c$ projective geometry minor over a finite field of order greater than $q$.

Let $d=f_{\ref{thm:local-rep}}(s,c,t)$. Choose $M\in\mathcal C$ with a $\PG(t+d-1,q)$-minor. By the Scum Theorem~\cite[Theorem~3.3.1]{Oxley}, there is a contraction-minor $N=M/C_0$ of rank $t+d$ with a spanning $\PG(t+d-1,q)$-restriction.

Since $N$ is a minor of $M$, it has no $U_{s,2s}$-minor and no rank-$c$ projective geometry minor over a finite field of order greater than $q$. By Theorem~\ref{thm:local-rep}, there is a set $C_1\subseteq E(N)$ with $|C_1|\le d$ such that every rank-$t$ restriction of $N/C_1$ is $\GF(q)$-representable. Moreover, $r(N/C_1)\ge r(N)-|C_1|\ge t$.

Let $B$ be a $t$-element independent set of $N/C_1$. Since $N/C_1=M/(C_0\cup C_1)$, the set $B$ is independent in $M$ and skew to $C_0\cup C_1$. Taking closure preserves skewness, so $\cl_M(B)$ is a rank-$t$ flat of $M$ skew to $C_0\cup C_1$. As $M$ is simple, $\cl_M(B)\cap(C_0\cup C_1)=\emptyset$. Proposition~\ref{prop:skew-restriction} gives
\[
M|\cl_M(B)=\bigl(M/(C_0\cup C_1)\bigr)|\cl_M(B)=(N/C_1)|\cl_M(B).
\]
The last matroid is a rank-$t$ restriction of $N/C_1$ and is therefore $\GF(q)$-representable. Hence $\cl_M(B)$ is a rank-$t$ $\GF(q)$-representable flat of $M$. Since $q<2s$, this contradicts $M\in\mathcal C$.

Consequently, there is a positive integer $u_{\ref{lem:representable-flat}}(s,t)$ such that no member of $\mathcal C$ has a projective geometry minor of rank $u_{\ref{lem:representable-flat}}(s,t)$. This proves the lemma.
\end{proof}

\section{The proofs of the unavoidable-flats theorems}\label{sec:main-proof}\label{sec:proof}

We are now ready to prove the main results of this paper.

\begin{proof}[Proof of Theorem~\ref{thm:representable-flat}]
Fix $s,t\ge2$, let $u=\max\{s,u_{\ref{lem:representable-flat}}(s,t)\}$, and let $R_{\ref{thm:representable-flat}}(s,t)=R_{\ref{cor:pg-minor}}(s,t,u)$. Let $M\in\cB(s)$ have rank at least $R_{\ref{thm:representable-flat}}(s,t)$.

Suppose first that $M$ is simple. If $M$ has a rank-$t$ independent flat, then its restriction is $\GF(2)$-representable and $2<2s$. Otherwise Corollary~\ref{cor:pg-minor} gives a rank-$u$ projective geometry minor. Restricting this geometry to rank $u_{\ref{lem:representable-flat}}(s,t)$ and applying Lemma~\ref{lem:representable-flat} gives the desired flat.

For a general matroid $M$, let $S=\si(M)$. Applying the simple case gives a rank-$t$ flat $H_0$ of $S$ such that $S|H_0$ is $\GF(q)$-representable for some $q<2s$. Let $H=\cl_M(H_0)$. Then $r_M(H)=t$, and $M|H$ is obtained from $S|H_0$ by adding loops and parallel elements, so it is also $\GF(q)$-representable.
\end{proof}

We prove a more detailed version of Theorem~\ref{thm:balanced-uniform-free}.

\begin{theorem}\label{thm:detailed}
For all integers $s\ge2$ and $n\ge1$, there is an integer $R_{\ref{thm:detailed}}(s,n)$ such that every simple matroid $M\in\cB(s)$ with $r(M)\ge R_{\ref{thm:detailed}}(s,n)$ has a rank-$n$ flat $F$ for which $M|F$ is isomorphic to $U_{n,n}$ or to a projective or affine geometry of rank $n$ over a field with fewer than $2s$ elements.
\end{theorem}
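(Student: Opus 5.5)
We prove the theorem by combining Theorem~\ref{thm:representable-flat} with Theorem~\ref{thm:finite-field}. Let $Q_s$ be the finite set of prime powers less than $2s$. The case $n=1$ is trivial: any single element is a rank-$1$ flat of a simple matroid and is isomorphic to $U_{1,1}$, so assume $n\ge2$. Put
\[
t=\max\bigl\{n,\ \max_{q\in Q_s}N_{\ref{thm:finite-field}}(q,n)\bigr\}
\qquad\text{and}\qquad
R_{\ref{thm:detailed}}(s,n)=R_{\ref{thm:representable-flat}}(s,t).
\]
Let $M\in\cB(s)$ be simple with $r(M)\ge R_{\ref{thm:detailed}}(s,n)$. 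Theorem~\ref{thm:representable-flat} gives a rank-$t$ flat $H$ of $M$ such that $M|H$ is $\GF(q)$-representable for some $q<2s$. Since $M$ is simple, $M|H$ is a simple $\GF(q)$-representable matroid. Its rank is $t\ge N_{\ref{thm:finite-field}}(q,n)$.

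Next we apply Theorem~\ref{thm:finite-field} to $M|H$. This gives a rank-$n$ flat $F$ of $M|H$ such that $(M|H)|F=M|F$ is free, or isomorphic to $\PG(n-1,q_0)$ or $\AG(n-1,q_0)$ for a subfield $\GF(q_0)$ of $\GF(q)$. Then $q_0\le q<2s$, so the field has fewer than $2s$ elements. A free simple rank-$n$ matroid is $U_{n,n}$. (If ``free'' were meant more loosely, a free restriction on a rank-$n$ flat of a simple matroid is still $U_{n,n}$.)

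It remains to lift $F$ from a flat of $M|H$ to a flat of $M$. Because $H$ is a flat of $M$ and $F$ is a flat of $M|H$, we have
\[
\cl_M(F)\subseteq\cl_M(H)=H,
\]
so $\cl_M(F)=\cl_{M|H}(F)=F$. Hence $F$ is a rank-$n$ flat of $M$.

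There is no serious obstacle; the only points needing care are two bookkeeping issues. First, the threshold must be uniform over the finitely many possible $q$, which is handled by taking the maximum over $Q_s$. Second, Theorem~\ref{thm:representable-flat} requires $t\ge2$, which holds because $t\ge n\ge2$.
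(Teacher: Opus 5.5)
Your proposal is correct and follows essentially the same route as the paper: you take $t$ at least the maximum of the thresholds $N_{\ref{thm:finite-field}}(q,n)$ over the prime powers $q<2s$, apply Theorem~\ref{thm:representable-flat} to obtain a representable rank-$t$ flat, and then apply Theorem~\ref{thm:finite-field} inside that flat. The only differences are cosmetic: you dispose of $n=1$ separately instead of including $2$ in the maximum defining $t$, and you spell out why a flat of $M|H$ is a flat of $M$.
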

\begin{proof}
Fix $s\ge2$ and $n\ge1$, let $Q_s$ be the set of prime powers less than $2s$, and let
\[
t(s,n)=\max\bigl(\{2,n\}\cup\{N_{\ref{thm:finite-field}}(q,n):q\in Q_s\}\bigr).
\]
Let $t=t(s,n)$, let $R_{\ref{thm:detailed}}(s,n)=R_{\ref{thm:representable-flat}}(s,t)$, and let $M\in\cB(s)$ be simple with rank at least $R_{\ref{thm:detailed}}(s,n)$.

By Theorem~\ref{thm:representable-flat}, there is a rank-$t$ flat $H$ of $M$ such that $M|H$ is $\GF(q)$-representable for some $q\in Q_s$. Since $M|H$ is simple and $t\ge N_{\ref{thm:finite-field}}(q,n)$, Theorem~\ref{thm:finite-field} gives a rank-$n$ flat $F$ of $M|H$ whose restriction is free or isomorphic to $\PG(n-1,q_0)$ or $\AG(n-1,q_0)$ for a subfield $\GF(q_0)$ of $\GF(q)$.

Since $H$ is a flat of $M$, so is $F$. If $M|F$ is a projective or affine geometry over $\GF(q_0)$, then $q_0\le q<2s$. This proves the theorem.
\end{proof}

\section{Ramsey and structural consequences}\label{sec:consequences}

\subsection{Colorings of flats}

The three families in Theorem~\ref{thm:unavoidable} support Ramsey theorems for colorings of flats. Geelen, Kroeker, and Nelson~\cite[Corollary~1.1]{GKN26} combined Ramsey's Theorem~\cite{Ramsey} and the geometric Ramsey theorems of Graham, Leeb, and Rothschild~\cite{GLR} to obtain such a theorem for matroids representable over a fixed finite field. For fixed integers $1\le a\le b$, Theorem~\ref{thm:detailed} and the uniform-minor reduction in Section~\ref{sec:introduction} give the same three families, with the projective and affine geometries over fields of prime-power order $q<2\max\{2,a,b-a\}$. Since there are only finitely many such field orders, their argument also applies to matroids with no $U_{a,b}$-minor. Let $\cF_d(M)$ denote the set of rank-$d$ flats of $M$.

\begin{corollary}\label{cor:ramsey}
For all integers $1\le a\le b$, $c\ge1$, and $1\le d\le k$, there is an integer $R$ such that every $c$-coloring of $\cF_d(M)$, where $M$ is simple with no $U_{a,b}$-minor and $r(M)\ge R$, has a rank-$k$ flat $F$ for which all members of $\cF_d(M|F)$ have the same color.
\end{corollary}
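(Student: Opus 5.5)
The plan is to reduce Corollary~\ref{cor:ramsey} to a Ramsey statement inside a single well-structured flat, and then invoke the classical Ramsey theorems on that flat.

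\textbf{Step 1: the uniform-minor reduction.} If $a=b$, the statement is vacuous once $R\ge a$, because a simple matroid with no $U_{a,a}$-minor has rank less than $a$. Otherwise set $s=\max\{2,a,b-a\}$. As shown in Section~\ref{sec:introduction}, every matroid with no $U_{a,b}$-minor lies in $\cB(s)$.

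\textbf{Step 2: Ramsey theorems on the three families.} Let $Q_s$ be the finite set of prime powers less than $2s$. Consider first the free family: for $U_{m,m}$, the rank-$d$ flats are exactly the $d$-subsets and the rank-$k$ flats are the $k$-subsets. Ramsey's Theorem~\cite{Ramsey} gives $m_0$ such that every $c$-coloring of the $d$-subsets of an $m_0$-set has a monochromatic $k$-subset. That $k$-subset is a rank-$k$ flat $F$ with $\cF_d(M|F)$ monochromatic.

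For $\PG(m-1,q)$ with $q\in Q_s$, the Graham--Leeb--Rothschild theorem~\cite{GLR} gives $m_q$ such that any $c$-coloring of the $d$-dimensional (projective rank-$d$) subspaces has a rank-$k$ subspace all of whose rank-$d$ subspaces share a color. Rank-$k$ subspaces are flats, and their rank-$d$ flats are exactly those subspaces.

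For $\AG(m-1,q)$, the affine version of~\cite{GLR} gives $m'_q$ with the same property for affine subspaces. One must check that the flats of $\AG(m-1,q)$ are exactly the affine subspaces, and that a rank-$j$ flat corresponds to an affine subspace of dimension $j-1$.

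Let $n$ be the maximum of $m_0$, $k$, and all the $m_q,m'_q$ for $q\in Q_s$. This maximum is finite because $Q_s$ is finite.

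\textbf{Step 3: locating a structured flat.} Take $R=R_{\ref{thm:detailed}}(s,n)$. By Theorem~\ref{thm:detailed}, $M$ has a rank-$n$ flat $F'$ with $M|F'$ isomorphic to $U_{n,n}$, to $\PG(n-1,q)$, or to $\AG(n-1,q)$ with $q\in Q_s$.

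\textbf{Step 4: transferring the coloring.} Since $F'$ is a flat of $M$, the flats of $M|F'$ are exactly the flats of $M$ contained in $F'$. Hence the coloring restricts to a $c$-coloring of $\cF_d(M|F')$. Step 2 then yields a rank-$k$ flat $F$ of $M|F'$, which is also a flat of $M$, with $\cF_d(M|F)$ monochromatic.

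Steps 3 and 4 carry no real difficulty. The main obstacle is the bookkeeping in Step 2. One must state the Graham--Leeb--Rothschild theorem in the matroid language, with the rank shift in the affine case, and handle the degenerate cases $d=k$ or $d=1$. For the latter, when $d=1$ Ramsey still applies to points. The argument works because the threshold is needed only for finitely many field orders.
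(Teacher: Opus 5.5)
Your proposal is correct and follows the paper's route. You reduce to $\cB(s)$ via the uniform-minor reduction, use Theorem~\ref{thm:detailed} to find a large flat that is free or a projective/affine geometry over one of finitely many fields, and apply Ramsey's theorem and the Graham--Leeb--Rothschild theorems inside it. (For $d=1$ in the projective and affine cases, the result needed is the Graham--Leeb--Rothschild theorem for points rather than Ramsey's theorem, but the theorem you already cite covers that case.)
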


\subsection{Local density}

The three families in Theorem~\ref{thm:unavoidable} also give a density interpretation.

\begin{corollary}\label{cor:density}
For all integers $1\le a\le b$ and $n\ge2$, every sufficiently high-rank simple matroid $M$ with no $U_{a,b}$-minor and no rank-$n$ independent flat has a rank-$n$ flat $F$ such that $|F|\ge2^{n-1}$.
\end{corollary}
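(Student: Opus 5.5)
We derive Corollary~\ref{cor:density} directly from Theorem~\ref{thm:detailed}, after the uniform-minor reduction of Section~\ref{sec:introduction}. Fix $1\le a\le b$ and $n\ge2$.

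First we reduce to $\cB(s)$. If $a=b$, then every matroid with no $U_{a,b}$-minor has rank less than $a$. Any threshold $R\ge a$ therefore makes the statement vacuous. If $a<b$, set $s=\max\{2,a,b-a\}$. As shown in Section~\ref{sec:introduction}, every matroid with no $U_{a,b}$-minor lies in $\cB(s)$. We take the threshold to be $R_{\ref{thm:detailed}}(s,n)$.

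Next, let $M$ be simple, with no $U_{a,b}$-minor, no rank-$n$ independent flat, and $r(M)\ge R_{\ref{thm:detailed}}(s,n)$. Theorem~\ref{thm:detailed} gives a rank-$n$ flat $F$ such that $M|F$ is isomorphic to $U_{n,n}$, to $\PG(n-1,q)$, or to $\AG(n-1,q)$ for some prime power $q<2s$.

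\begin{enumerate}
\item The case $M|F\cong U_{n,n}$ is impossible. Here $F$ would be an independent set that is a flat of rank $n$, which is excluded by hypothesis.
\item If $M|F\cong\PG(n-1,q)$, then
\[
|F|=\frac{q^n-1}{q-1}\ge 2^n-1\ge 2^{n-1}.
\]
\item If $M|F\cong\AG(n-1,q)$, then $|F|=q^{n-1}\ge 2^{n-1}$, since $q\ge2$.
\end{enumerate}

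In every possible case $|F|\ge 2^{n-1}$, as required.

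There is no substantial obstacle here. The only points needing care are the following.
\begin{itemize}
\item The hypothesis $n\ge2$ is what ensures that a rank-$n$ independent flat is exactly a free flat $U_{n,n}$, so the first case is excluded by the hypothesis.
\item The degenerate case $a=b$ is handled by the rank bound, as noted above.
\item The bound $2^{n-1}$ is attained by $\AG(n-1,2)$, so the affine case is the tight one.
\end{itemize}
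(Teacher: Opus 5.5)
Your proof is correct and is essentially the paper's argument: the paper applies Theorem~\ref{thm:unavoidable} (which is Theorem~\ref{thm:detailed} combined with the same uniform-minor reduction), rules out the free case by the hypothesis, and notes that rank-$n$ projective and affine geometries over $\GF(q)$ have at least $q^{n-1}\ge2^{n-1}$ points. One harmless slip is your side remark that $n\ge2$ is what makes a rank-$n$ independent flat the same as a free flat, since that identification holds for every $n$; the actual role of $n\ge2$ is that for $n=1$ every simple matroid of positive rank has a rank-$1$ independent flat, so that case would be vacuous.
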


By Theorem~\ref{thm:unavoidable}, $M$ has a rank-$n$ flat $F$. Since $M$ has no rank-$n$ independent flat, $M|F$ is a projective or affine geometry over $\GF(q)$ for some prime power $q$. Both projective and affine geometries of rank $n$ over $\GF(q)$ have at least $q^{n-1}\ge2^{n-1}$ points.

If $M$ is representable over a fixed finite field $\mathbb{F}$ of characteristic $p$, then Theorem~\ref{thm:finite-field} gives a rank-$n$ flat $F$ with $|F|\ge p^{n-1}$.

\subsection{Matroids of girth at least five}

Geelen~\cite[Conjecture~15]{Geelen21} conjectured that every simple matroid of sufficiently large rank and girth at least five has a large independent flat. Projective geometries of rank at least two contain triangles, as do
affine geometries of rank at least two over fields of order at least three. Binary affine geometries are triangle-free, but those of rank at least three contain four-element circuits. Finally, $\AG(1,2)$ is free. Thus, by Theorem~\ref{thm:unavoidable}, for every positive integer $n$, a sufficiently high-rank simple matroid with no $U_{a,b}$-minor and girth at least five has a rank-$n$ flat whose restriction is free, and therefore has a rank-$n$ independent flat. A similar conclusion for matroids representable over a prime field with girth at least five appeared in~\cite[Corollary~1.3]{GK24}.

\begin{corollary}\label{cor:girth}
For all integers $1\le a\le b$ and $n\ge1$, every sufficiently high-rank simple matroid $M$ with no $U_{a,b}$-minor and girth at least five has a rank-$n$ independent flat.
\end{corollary}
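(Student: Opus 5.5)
The plan is to derive Corollary~\ref{cor:girth} directly from Theorem~\ref{thm:unavoidable}, taking $R$ to be $R_{\ref{thm:unavoidable}}(a,b,n)$. Let $M$ be a simple matroid with no $U_{a,b}$-minor, girth at least five, and $r(M)\ge R$. Theorem~\ref{thm:unavoidable} gives a rank-$n$ flat $F$ such that $M|F$ is free, or isomorphic to $\PG(n-1,q)$, or isomorphic to $\AG(n-1,q)$ for some prime power $q$. If $M|F$ is free, then $F$ is itself a rank-$n$ independent flat and we are done. So the whole argument consists of ruling out the two geometric alternatives, except where they happen to be free.

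The exclusion uses the fact that every circuit of $M|F$ is a circuit of $M$, so $M|F$ also has girth at least five, meaning it has no circuits of size at most four.

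\begin{itemize}
\item If $n=1$, every rank-$1$ simple restriction is a single point, which is free.
\item If $n\ge2$ and $M|F\cong\PG(n-1,q)$, then $F$ contains a line with at least $q+1\ge3$ points. Any three of those points form a triangle, a contradiction.
\item If $M|F\cong\AG(n-1,q)$ with $q\ge3$ and $n\ge2$, a line has $q\ge3$ points, again giving a triangle.
\item If $M|F\cong\AG(n-1,2)$ with $n\ge3$, then any affine plane $\AG(2,2)\cong U_{3,4}$ inside it is a $4$-element circuit, a contradiction.
\item The remaining case is $\AG(1,2)\cong U_{2,2}$ with $n=2$, which is free.
\end{itemize}

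In every surviving case $F$ is a rank-$n$ independent flat.

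I do not expect any step to be a real obstacle. The only point needing care is making the case list exhaustive for small $n$: the degenerate geometries $\AG(0,q)$, $\PG(0,q)$ (both single points) and $\AG(1,2)$ must be recognized as free, which is harmless since in those cases $F$ is already independent.
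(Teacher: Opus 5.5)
Your proposal is correct and follows the paper's argument: you apply Theorem~\ref{thm:unavoidable}, rule out projective geometries and non-binary affine geometries of rank at least two via triangles, rule out binary affine geometries of rank at least three via four-element circuits, and note that $\AG(1,2)$ is free. Your explicit treatment of the case $n=1$, and your remark that girth at least five passes to the restriction $M|F$, spell out small points the paper leaves implicit.
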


\medskip
\noindent\textbf{AI declaration.} In an initial draft of this manuscript, the author attempted to extend Theorem~\ref{thm:finite-field} to classes of matroids excluding a long-line minor. For that problem, the author used a result of Nelson and Norin~\cite[Theorem~1]{NN21} together with the Growth Rate Theorem~\cite{GKW09} to force a large projective geometry minor. When the author was stuck on passing from a projective geometry minor to a representable flat, OpenAI's ChatGPT (6.1 Sol) found a proof of an initial version of Lemma~\ref{lem:pg-to-flat}. In particular, it pointed out the relevance of~\cite[Lemma~7.2]{GN17}. The author then realized that a similar conclusion might hold for matroids excluding a fixed uniform matroid as a minor (the main result of the present paper), if we can bypass the usage of the Growth Rate Theorem. ChatGPT then brought~\cite[Theorem~1.1]{GN15} to the author's attention and assisted with the proof of Lemma~\ref{lem:exponential-covering}.

The author also used ChatGPT for proofreading, language editing, and organizing references. The author wrote the final versions of all arguments, has checked the content, and takes full responsibility for it.

\end{document}